\newtheorem{thm}{Theorem}[section]
\newtheorem{prop}[thm]{Proposition}
\newtheorem{con}[thm]{Conjecture}
\newtheorem{oprob}[thm]{Open Problem}
\theoremstyle{definition}
\theoremstyle{definition}
\newtheorem{question}[thm]{Question}
\theoremstyle{definition}
\newtheorem{defn}[thm]{Definition}
\theoremstyle{remark}
\newtheorem{rem}[thm]{Remark}
\numberwithin{equation}{section}
\newcommand*\circled[1]{\tikz[baseline=(char.base)]{
            \node[shape=circle,draw,inner sep=0pt,minimum size=5mm] (char) {#1};}}
\newcommand{\rootsF}[5]{\circled{#1}
         	\begin{tabular}{cccc}
            #2&#3&#4&#5
            \end{tabular}}
\newcommand{\rmnum}[1]{\romannumeral #1}
\newcommand{\Rmnum}[1]{\expandafter\@slowromancap\romannumeral #1@}
\begin{document}

\title{Complete reducibility of subgroups of reductive algebraic groups over nonperfect fields \Rmnum{4}: An $F_4$ example}
\author{Falk Bannuscher, Alastair Litterick, Tomohiro Uchiyama}
\date{}
\maketitle 

\begin{abstract}
Let $k$ be a nonperfect separably closed field. Let $G$ be a connected reductive algebraic group defined over $k$. We study rationality problems for Serre's notion of complete reducibility of subgroups of $G$. In particular, we present the first example of a connected nonabelian $k$-subgroup $H$ of $G$ that is $G$-completely reducible but not $G$-completely reducible over $k$, and the first example of a connected nonabelian $k$-subgroup $H'$ of $G$ that is $G$-completely reducible over $k$ but not $G$-completely reducible. This is new: all previously known such examples are for finite (or non-connected) $H$ and $H'$ only. 
\end{abstract}

\noindent \textbf{Keywords:} algebraic groups, complete reducibility, rationality, geometric invariant theory, spherical buildings, pseudo-reductive groups
\section{Introduction}
Let $k$ be a field. Let $\overline k$ be an algebraic closure of $k$. Let $G$ be a connected affine algebraic $k$-group: we regard $G$ as a $\overline k$-defined algebraic group together with a choice of $k$-structure in the sense of Borel~\cite[AG.~11]{Borel-AG-book}. We say that $G$ is \emph{reductive} if the unipotent radical $R_u(G)$ of $G$ is trivial. Throughout, $G$ is always a connected reductive $k$-group (unless stated otherwise). In this paper, we continue the investigation of rationality problems for complete reducibility of subgroups of $G$; see~\cite{Uchiyama-Nonperfect-pre},~\cite{Uchiyama-Nonperfectopenproblem-pre},~\cite{Uchiyama-Nonperfectopenproblem-pre-part3}. By a subgroup of $G$ we mean a (possibly non-$k$-defined) closed subgroup of $G$. Following Serre~\cite[Sec.~3]{Serre-building} we make the following definition.
\begin{defn}
A subgroup $H$ of $G$ is called \emph{$G$-completely reducible over $k$} (\emph{$G$-cr over $k$} for short) if whenever $H$ is contained in a $k$-defined parabolic subgroup $P$ of $G$, then $H$ is contained in a $k$-defined Levi subgroup of $P$. In particular, if $H$ is not contained in any proper $k$-defined parabolic subgroup of $G$, $H$ is called \emph{$G$-irreducible over $k$} (\emph{$G$-ir over $k$} for short). 
\end{defn}

This notion of $G$-complete reducibility faithfully generalises the notion of completely reducible representations, see~\cite[Sec.~3]{Serre-building} for more detail. So far, most studies on $G$-complete reducibility consider the case $k = \bar k$, for example~\cite{Liebeck-Seitz-memoir},~\cite{Stewart-nonGcr},~\cite{Litterick-Thomas-goodChar-Trans}, and not much is known for a non-algebraically closed $k$ (in particular for nonperfect $k$) except a few general results and important examples, see~\cite{Bate-cocharacter-Arx},~\cite[Sec.~5]{Bate-geometric-Inventione},~\cite{Uchiyama-Nonperfect-pre},~\cite{Uchiyama-Nonperfectopenproblem-pre},~\cite{Uchiyama-Nonperfectopenproblem-pre-part3}. We say that a subgroup $H$ of $G$ is $G$-cr ($G$-ir) if $H$ is $G$-cr ($G$-ir) over $\bar k$. Now it is natural to ask:

\begin{question}\label{MainQuestion}
Let $H$ be a subgroup of $G$. 
\begin{enumerate}
\item{If $H$ is $G$-cr, then is it $G$-cr over $k$?}
\item{If $H$ is $G$-cr over $k$, then is it $G$-cr?} 
\end{enumerate}
\end{question}

Here is the main result of this paper:
\begin{thm}\label{F4example}
Let $k$ be a nonperfect separably closed field of characteristic $2$. Let $G$ be a simple $k$-group of type $F_4$. Then there exists a connected nonabelian $k$-subgroup $H$ of $G$ that is $G$-cr but not $G$-cr over $k$, and a connected nonabelian $k$-subgroup $H'$ of $G$ that is $G$-cr over $k$ but not $G$-cr.
\end{thm}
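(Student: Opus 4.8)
The plan is to settle both halves of Question~\ref{MainQuestion} negatively at once, by producing two connected nonabelian $k$-subgroups of $G$ built from the coupling of long and short roots of $F_4$ in characteristic $2$, and by detecting the failure of ``$G$-cr $\Leftrightarrow$ $G$-cr over $k$'' cohomologically, via the non-smoothness of a suitable centralizer group scheme. First I would fix a $k$-split maximal torus $T\le G$ (recall $k$ separably closed forces $G$ to be $k$-split), a Borel subgroup $B\supseteq T$, a base of simple roots with the two long and the two short simple roots distinguished, and a pinning; then I would record the Chevalley commutator relations of $F_4$ in characteristic $2$, keeping careful track of the structure constants coupling a long root to a short root and of the squares that the length ratio forces into those relations. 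I would also fix $a\in k\setminus k^{2}$ and put $k'=k(a^{1/2})$. Two standing facts would be used throughout. First: if $M$ is $L$-irreducible over $\bar k$ for a Levi subgroup $L$ of $G$ then $M$ is $G$-cr (an irreducible subgroup is completely reducible, and complete reducibility is detected on Levi subgroups), whereas a connected nonabelian $k$-subgroup of $G$ that is not reductive over $\bar k$ is never $G$-cr (being $G$-cr forces reductivity, by Borel--Tits) but is automatically $G$-cr over $k$ as soon as it is $G$-irreducible over $k$. Second: the $k$-defined Levi subgroups of a $k$-parabolic $P$ are exactly the $R_u(P)(k)$-conjugates of a fixed one, so a $k$-subgroup $H\le P$ on which the projection $\pi_P\colon P\to L:=P/R_u(P)$ is injective lies in a $k$-defined Levi of $P$ if and only if the transporter scheme $\mathrm{Transp}_{R_u(P)}(H,\pi_P(H))$ --- which, being nonempty over $\bar k$, is a torsor under $C_{R_u(P)}(\pi_P(H))$ --- has a $k$-rational point.

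For the direction ``$G$-cr but not $G$-cr over $k$'' I would argue as follows. Choose $\lambda\in Y(T)$ so that $P:=P_\lambda$ has Levi $L:=L_\lambda$ containing a $k$-subgroup $M\cong\mathrm{SL}_2$ --- for instance a long or short root $\mathrm{SL}_2$, or a diagonal $\mathrm{SL}_2$ inside a product of root subgroups --- selected so that (a) $M$ is $L$-irreducible over $\bar k$, and (b) the $M$-module $\mathrm{Lie}(R_u(P))$, filtered by $\lambda$-weight, has a trivial submodule whose generator is not visible at the group level; concretely, so that $C_{R_u(P)}(M)$ is non-smooth with a height-one infinitesimal direct factor isomorphic to $\alpha_2=\ker(F\colon\mathbb{G}_a\to\mathbb{G}_a)$. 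Using the squares in the commutator relations of $F_4$ I would then write down an explicit morphism $c\colon M\to R_u(P)$ that is defined over $k$, that is a $1$-cocycle (so $H:=\{\,m\cdot c(m)\mid m\in M\,\}$ is a subgroup, visibly $\cong\mathrm{SL}_2$, connected, nonabelian and $k$-defined), and that becomes a coboundary over $k'$ --- hence over $\bar k$ --- by means of an element $u\in R_u(P)(k')\setminus R_u(P)(k)$ built out of $a^{1/2}$. Then $H$ is $R_u(P)(\bar k)$-conjugate to $M$, so $H$ is $G$-cr because $M$ is; but $\pi_P(H)=M$, so by the second standing fact $H$ lies in a $k$-Levi of $P$ if and only if $\mathrm{Transp}_{R_u(P)}(H,M)$ has a $k$-point, and I would compute the class of this torsor in $H^1(k,C_{R_u(P)}(M))$ to be the image of $[a]$ under $k/k^{2}=H^1(k,\alpha_2)\hookrightarrow H^1(k,C_{R_u(P)}(M))$, which is nonzero precisely because $k$ is nonperfect. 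Hence $H$ lies in no $k$-Levi of $P$, so $H$ is not $G$-cr over $k$.

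For the converse direction ``$G$-cr over $k$ but not $G$-cr'' I would produce a connected nonabelian $k$-subgroup $H'\le G$ that is $G$-irreducible over $k$ (hence, trivially, $G$-cr over $k$) but fails to be $G$-cr --- either because $H'$ is pseudo-reductive with nontrivial geometric unipotent radical, and so is not reductive, or because $H'$ is reductive but sits in a proper $\bar k$-parabolic of $F_4$ without lying in any of its $\bar k$-Levi subgroups. The natural source for such an $H'$ is the exotic phenomenon of Conrad--Gabber--Prasad in characteristic $2$ attached to the very special isogeny of $F_4$ (which interchanges long and short roots) together with the extension $k'/k$: in the first case one takes a small-rank pseudo-reductive $k$-subgroup arising from this structure and embeds it into a Levi subgroup of $F_4$; in the second, dually to the construction above, one takes a twisted section $H'=\{\,m\cdot c'(m)\mid m\in M\,\}$ inside a $\bar k$-parabolic $P_\mu$ with $\mu$ not $k$-rational, arranged so that $H'$ descends to $k$ although neither $M$ nor the $\bar k$-cocycle $c'$ does, with $c'$ non-split over $\bar k$. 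In either case the remaining point is $G$-irreducibility over $k$, which I would verify by computing $C_G(H')$, checking it contains no nontrivial $k$-torus, and ruling out the other $k$-parabolic subgroups of $F_4$ containing $H'$ by a case analysis along their classification, combined (in the pseudo-reductive case) with Tits' description of the pseudo-parabolic subgroups of $H'$.

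The main obstacle is the explicit construction and verification, inside $F_4$ in characteristic $2$, of the configuration $(P,M)$ that makes $C_{R_u(P)}(M)$ non-smooth with a controlled $\alpha_2$-factor (and the analogous centralizer in the second construction), together with the bookkeeping of structure constants needed to exhibit a genuinely $k$-defined cocycle that becomes a coboundary only over $k'$: this is exactly where the nonperfectness of $k$, the height-one infinitesimal kernel of the very special isogeny, and the long/short coupling special to $F_4$ in characteristic $2$ all must enter, and where one must match the discrepancy between group-scheme and Lie-algebra fixed points precisely against $k/k^{2}$. A second, more routine but lengthy task is to certify $G$-irreducibility over $k$ for $H'$, which requires going through the $k$-parabolic subgroups of $F_4$ one conjugacy class at a time. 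Everything else --- the connectedness, nonabelianness and $k$-rationality of $H$ and $H'$, and the $L$-irreducibility of $M$ --- reduces to standard structure theory of $F_4$ and its Levi subgroups.
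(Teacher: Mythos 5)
For the direction ``$G$-cr but not $G$-cr over~$k$,'' your plan is essentially the same construction as the paper's, rephrased cohomologically. The paper fixes $\lambda=13^{\vee}$ so that $L_\lambda$ has type $B_3$, takes $M$ to be a $G_2$ subgroup of $L_\lambda$ (not an $\mathrm{SL}_2$ as you suggest --- irreducibility of $M$ in $L_\lambda$ is needed, and $G_2<B_3$ is the configuration whose nonseparability in $F_4$ is already documented), and sets $H=v(\sqrt a)\cdot M$ with $v(\sqrt a)=\epsilon_{20}(\sqrt a)\epsilon_{21}(\sqrt a)$. The nonseparability you describe (an infinitesimal $\alpha_2$-type discrepancy between $C_{R_u(P)}(M)$ and its Lie centralizer) is exactly what makes $H$ $k$-defined. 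The place where you would compute a class in $H^1(k,C_{R_u(P)}(M))$ the paper instead handles directly via the GIT limit criterion (Proposition~\ref{rationalonjugacy}): it writes down a generic tuple, takes the $\lambda$-limit, and shows that conjugating back inside $R_u(P_\lambda)(k)$ forces $a=x_{21}^2$ with $x_{21}\in k$, contradiction. Your $H^1$ torsor picture and the paper's explicit limit argument are two formalizations of the same mechanism; the paper's is more computational, yours more conceptual, but the underlying obstruction ($k/k^2\ne 0$ meeting the squares in the $F_4$ commutator relations) is identical.

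For the direction ``$G$-cr over~$k$ but not $G$-cr,'' your proposal has a real gap. Your second alternative, a twisted section $H'=\{m\cdot c'(m)\}$ with ``$c'$ non-split over $\bar k$,'' cannot be realized in the way you sketch: over $\bar k$ one has $\bar k/\bar k^2=0$, so any $\alpha_2$-valued (or more generally unipotent-group-scheme-valued) cocycle of the kind you are proposing splits over $\bar k$, hence the twisted section would be $R_u(P)(\bar k)$-conjugate to $M$ and therefore still $G$-cr. The missing idea is the paper's: take $v(\sqrt a)=\epsilon_{-20}(\sqrt a)\epsilon_{-21}(\sqrt a)$ (negative roots, so $v\notin P_\lambda$ and $v\cdot P_\lambda$ is no longer $k$-defined), and --- crucially --- enlarge the conjugate of $M$ by adjoining the root subgroup $U_{18}$, setting $H=\langle v(\sqrt a)\cdot M,\,U_{18}\rangle$. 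This does two things at once: it collapses $C_G(H)^\circ$ from the rank-one group $G_{13}\cong C_G(M)^\circ$ down to the unipotent subgroup $v(\sqrt a)\cdot U_{13}$, whence $C_G(H)$ is not $G$-cr by Borel--Tits and hence $H$ is not $G$-cr by the centralizer criterion \cite[Cor.~3.17]{Bate-geometric-Inventione}; and it rigidifies the configuration so that $v(\sqrt a)\cdot P_\lambda$ is the \emph{unique} proper $\bar k$-parabolic containing $H$, which, being non-$k$-defined, forces $H$ to be $G$-irreducible over $k$. Neither the adjunction of an extra root subgroup to shrink the centralizer, nor the unipotent-centralizer route to non-$G$-cr, nor the uniqueness argument for the parabolic appear in your plan; and your first alternative (a pseudo-reductive $H'$ via Weil restriction in the spirit of Proposition~\ref{GcrandpseudoredGLpm}) is a genuinely different construction that the paper uses only for $GL_{p^s}$, not for the connected $F_4$ example. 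The very special isogeny of $F_4$, which you lean on, enters the paper only in auxiliary remarks about why $\sigma$-twisting does \emph{not} yield further examples.
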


Several comments are in order. First, when we consider Question~\ref{MainQuestion}, we can assume $k=k_s$ (a separable closure of $k$) by the following result, which is~\cite[Thm.~1.1]{Bate-separable-Paris}:
\begin{prop}\label{separableFieldExt}
A subgroup $H$ of $G$ is $G$-cr over $k$ if and only if $H$ is $G$-cr over $k_s$. 
\end{prop}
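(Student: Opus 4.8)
The plan is to exhibit an explicit connected nonabelian $k$-subgroup $H$ of the simple group $G$ of type $F_4$ in characteristic $2$ over a nonperfect separably closed field $k$, and to verify separately that $H$ is $G$-cr over $k$ but not $G$-cr over $\overline k$ (the ``vice versa'' will come from a companion construction, or by swapping the roles once an asymmetry is set up). The natural source of such pathologies is the failure of parabolic subgroups to be ``defined over $k$ in the naive way'' together with the appearance of non-reductive centralizers: in characteristic $2$ the group $F_4$ has a subsystem subgroup of type $C_3$ (or a short-root $A_1$) whose scheme-theoretic centralizer is non-smooth, and more importantly the long/short root length asymmetry in $F_4$ at $p=2$ produces exceptional isogenies. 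So first I would fix a maximal $k$-torus and a Borel, set up the root system of $F_4$ with its two root lengths, and pick a carefully chosen one-parameter family of subgroups $H = H_t$ built from root subgroups with a parameter $t$ lying in $k \setminus k^2$; the element $t^{1/2} \in \overline k \setminus k$ is what will distinguish the geometric picture from the $k$-rational one.

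The key steps, in order, are: (1) Show $H$ lies in a $k$-defined parabolic $P$ of $G$ — this is typically forced by $H$ normalizing a $k$-defined unipotent subgroup or fixing a $k$-point of the building, and one reads off $P$ as the stabilizer. (2) Show that over $\overline k$, $H$ is \emph{not} contained in any Levi subgroup of $P$: the obstruction is that the ``straightening'' cocharacter that would conjugate $H$ into a Levi requires coordinates involving $t^{1/2}$, so it exists over $\overline k$ but not over $k$; concretely one computes $C_G(H)^\circ$ or the image of $H$ under the projection $P \to P/R_u(P)$ and shows the relevant obstruction class is nonzero. Hence $H$ is not $G$-cr. (3) Show that $H$ \emph{is} $G$-cr over $k$: here one must check that \emph{every} $k$-defined parabolic containing $H$ also contains a $k$-defined Levi of $H$; using the building-theoretic characterization of $G$-cr over $k$ (Serre's result that $H$ is $G$-cr over $k$ iff the fixed-point set of $H$ in the $k$-points of the building is either empty of a certain type or $H$ is contained in a $k$-Levi), one argues that the only $k$-parabolics containing $H$ are the ``trivially handled'' ones, e.g.\ $H$ is $k$-irreducible in a smaller $k$-Levi $L$ and $L$ is $G$-ir over $k$, or one uses the criterion involving the non-existence of destabilizing $k$-cocharacters from geometric invariant theory (Bate–Martin–Röhrle). (4) For the ``vice versa'' direction, either the same $H$ works by symmetry of the two root lengths under the exceptional isogeny of $F_4$, or one takes the ``mirror'' subgroup $H'$ obtained by interchanging long and short roots, which will be $G$-cr but not $G$-cr over $k$.

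I expect the main obstacle to be step (3): proving $H$ is $G$-cr over $k$ requires controlling \emph{all} $k$-defined parabolics containing $H$, not just the one from step (1), and ruling out that $H$ sits in some unexpected $k$-parabolic without a matching $k$-Levi. The cleanest route is to reduce to a minimal $k$-parabolic $P$ over $k$ containing $H$ and show $H$ is $R_u(P)$-conjugate over $k$ into a $k$-Levi; equivalently, invoke the characteristic-$p$ version of the Bate–Martin–Röhrle rational Kempf–Rousseau theory to show that the $G(k)$-orbit of $H$ is closed in the relevant scheme, or alternatively embed $H$ into a $G$-ir-over-$k$ reductive subgroup (such as a subsystem $C_3$ or $B_4$-type, or the fixed points of a graph-like automorphism) where $H$ is visibly irreducible over $k$, and then quote that $G$-ir-over-$k$ subgroups of that subgroup that remain $G$-ir-over-$k$ give $G$-cr over $k$. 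Verifying that the chosen reductive overgroup is itself $G$-ir over $k$ — which for $F_4$ in characteristic $2$ can be subtle because of the pseudo-reductive phenomena flagged in the abstract — is where the delicate, case-specific computation with the $F_4$ root datum and the element $t \notin k^2$ will concentrate.
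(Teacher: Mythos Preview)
Your proposal addresses the wrong statement. Proposition~\ref{separableFieldExt} asserts the \emph{equivalence} ``$H$ is $G$-cr over $k$ if and only if $H$ is $G$-cr over $k_s$'', i.e.\ passage to the \emph{separable} closure changes nothing. This is a positive, general result. What you have sketched instead is a construction of a connected $k$-subgroup of $F_4$ that is $G$-cr over $k$ but not $G$-cr over $\overline k$ (and vice versa) --- that is Theorem~\ref{F4example}, the main theorem of the paper, which concerns the gap between $k$ and the \emph{algebraic} closure. Your entire plan (choosing $t\in k\setminus k^2$, exploiting $t^{1/2}\in\overline k\setminus k$, building the $F_4$ example, etc.) is aimed at producing a counterexample, which is the opposite of what Proposition~\ref{separableFieldExt} claims; indeed, a successful execution of your plan would be irrelevant to, and certainly not a proof of, the stated proposition.

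As for the proposition itself: the paper does not prove it but quotes it as \cite[Thm.~1.1]{Bate-separable-Paris}, noting that the argument there relies on the Tits centre conjecture (Conjecture~\ref{centerconjecture}). A genuine proof attempt would need to argue, via Galois descent and the centre conjecture applied to the fixed-point subcomplex $\Delta(G)^H$, that $k$-parabolics and $k$-Levi subgroups containing $H$ are in suitable bijection with their $k_s$-counterparts; none of the $F_4$-specific root-subgroup computation you outline is relevant to that.
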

In particular, Question~\ref{MainQuestion} has an affirmative answer if $k$ is perfect. Proposition~\ref{separableFieldExt} depends on the recently proved and deep \emph{centre conjecture of Tits} (see Conjecture~\ref{centerconjecture}) in spherical buildings~\cite{Serre-building},~\cite{Tits-colloq},~\cite{Weiss-center-Fourier}. The centre conjecture (theorem) has been used to study complete reducibility over $k$, see~\cite{Bate-cocharacterbuildings-Arx},~\cite{Uchiyama-Nonperfectopenproblem-pre}. In this paper, we prove Proposition~\ref{centralizerA} which is related to a rationality problem for the centre conjecture. We assume $k=k_s$ throughout.  

Second, the third author had already found several examples of a subgroup $H$ (or $H'$) satisfying Theorem~\ref{F4example} for $G$ of type $D_4$, $G_2$, $E_6$, and $E_7$ and for $k$ of characteristic $2$ in~\cite{Uchiyama-Separability-JAlgebra},~\cite{Uchiyama-Nonperfect-pre},~\cite{Uchiyama-Nonperfectopenproblem-D4}. We stress that Theorem~\ref{F4example} not only extends our collection of such examples, but it is new: we give the first \emph{connected} such subgroup. In fact, our subgroups $H$ and $H'$ are just slight modifications of a simple subgroup of type $G_2$ in $F_4$. We are surprised with this pathological example since connected (moreover simple) subgroups of $G$ usually behave nicely. 

Third, it is not hard to find examples of such behaviour if we allow $H$ (or $H'$) to be non-$k$-defined. To find a $k$-defined $H$, we have used \emph{nonseparability} of $H$ (or a part of $H'$) in $G$. In fact, combining~\cite[Thms.~1.5 and~9.3]{Bate-cocharacter-Arx} we have that if a $k$-subgroup $H$ of $G$ is separable in $G$ and $H$ is $G$-cr, then it is $G$-cr over $k$. Recall~\cite[Def.~1.5]{Uchiyama-Separability-JAlgebra}:
\begin{defn}
Let $H$ and $N$ be affine algebraic groups. Suppose that $H$ acts on $N$ by group automorphisms. The action of $H$ on $N$ is called \emph{separable} if $\textup{Lie}\;C_N(H) = \mathfrak{c}_{\textup{Lie} N}(H)$. 
\end{defn}

Note that the notion of a separable action is a slight generalisation of that of a separable subgroup~\cite[Def.~1.1]{Bate-separability-TransAMS}. See~\cite{Bate-separability-TransAMS} and~\cite{Herpel-smoothcentralizerl-trans} for more on separability. In our construction, it is crucial  that our $H$ (or a part of $H'$) acts nonseparably on the unipotent radical of some proper $k$-parabolic subgroup of $G$. It is known that if the characteristic $p$ of $k$ is very good for $G$, every subgroup of $G$ is separable~\cite[Thm.~1.2]{Bate-separability-TransAMS}. This suggests that we need to work with small $p$. Such proper nonseparable subgroups are hard to find. Only a handful of such examples are known~\cite[Sec.7]{Bate-separability-TransAMS},~\cite{Uchiyama-Separability-JAlgebra},~\cite{Uchiyama-Nonperfect-pre},~\cite{Uchiyama-Nonperfectopenproblem-D4} and all of them are finite subgroups. In our example, $H$ (or the nonseparable part of $H'$) is connected (and very close to $G_2$).

Fourth, our method to construct $H$ and $H'$ via group-theoretic arguments and geometric invariant theory is almost identical to the constructions in $D_4$, $G_2$, $E_6$, and $E_7$ examples mentioned above~\cite{Uchiyama-Separability-JAlgebra},~\cite{Uchiyama-Nonperfect-pre},~\cite{Uchiyama-Nonperfectopenproblem-D4}. Since the same method works for many examples (for finite and connected $H$ and $H'$) we believe that there should be some general phenomenon underlying these constructions (cf.~\cite[Thms.~1.2, 1.3]{Uchiyama-Nonperfectopenproblem-pre-part3}). 
\begin{oprob}
Suppose that there exists a $k$-subgroup $H$ of $G$ that acts nonseparably on the unipotent radical of some proper $k$-parabolic subgroup of $G$. 
\begin{enumerate}
\item{If $H$ is $G$-cr, can one always find some modification $H'$ of $H$ such that $H'$ remains a $k$-subgroup, $H'$ remains $G$-cr, but for which $H'$ is not $G$-cr over $k$?}
\item{If $H$ is $G$-cr over $k$, can one always find some modification $H''$ of $H$ such that $H''$ remains a $k$-subgroup, $H''$ remains $G$-cr over $k$, but for which $H''$ is not $G$-cr?}
\end{enumerate}
\end{oprob}

We have also answered the second part of Question~\ref{MainQuestion} using a different method (Weil restriction) and a different language (scheme-theoretic). First, recall~\cite[Def.~1.1.1]{Conrad-pred-book}
\begin{defn}
Let $k$ be a field. Let $G$ be a connected affine algebraic $k$-group. If the $k$-unipotent radical $R_{u,k}(G)$ is trivial, $G$ is called \emph{pseudo-reductive}.
\end{defn}
Weil restriction is a standard tool to construct non-reductive pseudo-reductive groups~\cite[Ex.~1.6.1]{Conrad-pred-book}. Using Weil restriction and some scheme-theoretic argument, we show that:
\begin{prop}\label{GcrandpseudoredGLpm}
Let $k$ be a nonperfect field of characteristic $p$. Then for each power $p^s$ ($s\in \mathbb{N}$) there exists a $k$-subgroup $H$ of $G=GL_{p^s}$ that is $G$-cr over $k$ but not $G$-cr.
\end{prop}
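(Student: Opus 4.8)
The plan is to exploit the standard fact that Weil restriction along a purely inseparable extension produces a pseudo-reductive but non-reductive group, and to realise such a group as a subgroup of a general linear group in a way that makes the non-reductivity visible precisely as a failure of $G$-complete reducibility over $k$ versus over $\bar k$. Concretely, fix a nonperfect field $k$ of characteristic $p$ and an element $a \in k \setminus k^p$, and set $k' = k(a^{1/p})$, a degree-$p$ purely inseparable extension. Let $\mathbb{G}_m$ be the multiplicative group over $k'$ and put $T = R_{k'/k}(\mathbb{G}_m)$, the Weil restriction; this is a $p$-dimensional commutative pseudo-reductive $k$-group whose unipotent radical over $\bar k$ is nontrivial (it is $(p-1)$-dimensional), while $R_{u,k}(T)$ is trivial. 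Iterating, or taking $k' = k(a^{1/p^s})$ of degree $p^s$, gives the general power $p^s$; I will describe the case $s=1$ and indicate the obvious modification.

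First I would recall the regular representation: $T = R_{k'/k}(\mathbb{G}_m)$ acts $k$-linearly on the $p^s$-dimensional $k$-vector space $V = k' $ (viewed as a $k$-space via the regular representation of the $k'$-torus on $k'$), giving a closed $k$-embedding $T \hookrightarrow GL(V) = GL_{p^s} =: G$. Over $\bar k$ we have $T_{\bar k} \cong \mathbb{G}_m \times U$ with $U \cong \mathbb{G}_a^{p^s-1}$ unipotent, and the composite $U \hookrightarrow T_{\bar k} \hookrightarrow GL_{p^s}$ is a nontrivial unipotent subgroup, hence $T_{\bar k}$ is contained in a proper parabolic $P$ of $G_{\bar k}$ but not in any Levi of $P$ — so $H := T$ is \emph{not} $G$-cr. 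Second, I would show $H$ \emph{is} $G$-cr over $k$: since $R_{u,k}(H)$ is trivial, $H$ is pseudo-reductive over $k$, and one shows that any $k$-parabolic $P$ of $G$ containing $H$ must contain a $k$-Levi containing $H$. The cleanest route is to use that a $k$-defined parabolic $P = P_\lambda$ of $G = GL_{p^s}$ containing $H$ corresponds to a $k$-defined flag stabilised by $H$; the unipotent part $R_u(P)(k)$-conjugation must carry $H$ into $L_\lambda$, and nonexistence of a $k$-defined unipotent piece of $H$ (i.e.\ $R_{u,k}(H)=1$) forces $H \subseteq L_\lambda$ already, or after a $k$-conjugation by $R_u(P)$; here one invokes that $H(k)$ normalises no nontrivial $k$-split unipotent subgroup of $R_u(P)$ because $H$ is pseudo-reductive and commutative, so its image in $R_u(P)$ under any such conjugation is trivial.

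The ``vice versa'' (an $H$ that is $G$-cr but not $G$-cr over $k$) I would obtain by a dual/complementary construction in the same $GL_{p^s}$: take $H$ to be a suitable $k$-form or $k$-conjugate so that over $\bar k$ it is $G$-ir (e.g.\ a $\bar k$-irreducible subgroup, ruling out any proper parabolic), but over $k$ it lies in a proper $k$-parabolic with no $k$-Levi containing it; the Weil-restriction framework again supplies this, since $R_{k'/k}$ of a $k'$-irreducible subgroup of $GL_{p^{s}/k'}$ can sit inside a proper $k$-parabolic of $GL_{p^s/k}$ whose Levi subgroups are all $k$-incompatible with the inseparable descent. Alternatively, and more economically, one notes that the failure of Question~\ref{MainQuestion} is symmetric under the correspondence between a group and a conjugate by a non-$k$-point once nonseparability is present, exactly as in the third remark of the introduction, and applies that observation to the $T$ above.

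The main obstacle I expect is the second step: rigorously proving that $H = R_{k'/k}(\mathbb{G}_m)$, embedded via the regular representation, is $G$-cr over $k$. The subtlety is that over $k$ one must rule out \emph{every} $k$-defined parabolic overgroup having no $k$-Levi containing $H$; this amounts to a concrete analysis of $k$-rational flags in $k' = V$ stable under multiplication by $(k')^\times$, together with the structure of $R_u(P)(k)$ as an $H$-module, and verifying that the relevant $H$-cohomology (or simply the fixed-point/conjugacy computation) vanishes over $k$. This is where the hypothesis that $H$ has trivial $k$-unipotent radical is used essentially, via the theory of pseudo-reductive groups~\cite{Conrad-pred-book}, and care is needed because the analogous statement fails over $\bar k$ precisely by construction.
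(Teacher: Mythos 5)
Your construction is the paper's: embed $H = R_{k'/k}(\mathbb{G}_m)$ in $G = GL_{p^s}$ via the regular representation on $V = R_{k'/k}(\mathbb{G}_a)$, and kill $G$-complete reducibility over $\bar k$ by observing that $H_{\bar k} \cong \mathbb{G}_m \times U$ has a nontrivial geometric unipotent radical, so $H$ is not reductive and hence not $G$-cr by Serre. That half is fine.

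The genuine gap is the step you yourself flag as ``the main obstacle'': showing $H$ is $G$-cr over $k$. Your proposed argument --- that pseudo-reductivity of $H$ plus ``$H(k)$ normalises no nontrivial $k$-split unipotent subgroup of $R_u(P)$'' forces $H$ into a $k$-Levi --- is not a proof; there is no theorem that pseudo-reductive $k$-subgroups of $GL(V)$ act semisimply on $V$, and indeed that is exactly the kind of statement the whole subject of ``complete reducibility over nonperfect $k$'' is probing. The paper closes this gap with a short orbit-count that you are missing: $H(k) = (k')^\times$ acts on $V(k) = k'$ by multiplication, which has exactly two orbits ($\{0\}$ and $k'\setminus\{0\}$). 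Hence the only $H(k)$-stable $k$-subspaces of $V$ are $0$ and $V$, so $V$ is an \emph{irreducible} $kH$-module, and by Proposition~\ref{reptheory} $H$ is $G$-\emph{ir} over $k$, a stronger conclusion than $G$-cr over $k$ and obtained without any analysis of parabolics, Levis or cohomology. Separately, note the statement of Proposition~\ref{GcrandpseudoredGLpm} only asks for the one direction ($G$-cr over $k$ but not $G$-cr); your ``vice versa'' paragraph is both unnecessary and, as sketched, not on a workable track.
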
 

The subgroup $H$ we find in proving Proposition~\ref{GcrandpseudoredGLpm} is abelian. We can find an abelian example (that is $G$-cr over $k$ but not $G$-cr) without using Weil restriction: take a \emph{$k$-anisotropic unipotent element} of $G$ for a finite example, and take its connected centraliser for a connected example, see~\cite[Rem.~5.3]{Uchiyama-Nonperfectopenproblem-pre}. Remember that a unipotent element is called \emph{$k$-anisotropic} if it is not contained in any proper $k$-parabolic subgroup of $G$. This means the classical construction of Borel-Tits~\cite[Thm.~2.5]{Borel-Tits-unipotent-invent} fails for nonperfect $k$.  

We included Proposition~\ref{GcrandpseudoredGLpm} in this paper since it shows a relation between rationality problems for complete reducibility and pseudo-reductivity. We think that this topic should be investigated further. For example, the answer to the following open problem is true if $k$ is perfect or $C_G(H)$ is pseudo-reductive, see~\cite[Sec.~6]{Uchiyama-Nonperfectopenproblem-pre} for more on this:
\begin{oprob}\label{centraliserProblem}
Let $k$ be a field. Suppose that a $k$-subgroup $H$ of $G$ is $G$-cr over $k$. then is $C_G(H)$ $G$-cr over $k$? 
\end{oprob}

Here is the structure of the paper. In Section 2, we will set our notion and recall some important results about complete reducibility and related result from geometric invariant theory (GIT for short). In Section 3, we give a short review of Weil-restriction and prove Proposition~\ref{GcrandpseudoredGLpm}. Then, in Section 4, we prove Theorem \ref{F4example}. Note that since our method is almost identical to that in~\cite{Uchiyama-Separability-JAlgebra},~\cite{Uchiyama-Nonperfect-pre},~\cite{Uchiyama-Nonperfectopenproblem-D4}, our proof is just a minimum skeleton. Finally, in Section 5, we discuss a rationality problem related to complete reducibility and the centre conjecture.

\section{Preliminaries}
Throughout, we denote by $k$ a separably closed field. Our references for algebraic groups are~\cite{Borel-AG-book},~\cite{Borel-Tits-Groupes-reductifs},~\cite{Conrad-pred-book},~\cite{Humphreys-book1}, and~\cite{Springer-book}. 

Let $H$ be a (possibly non-connected) affine algebraic group. We write $H^{\circ}$ for the identity component of $H$. We write $[H,H]$ for the derived group of $H$. A reductive group $G$ is called \emph{simple} as an algebraic group if $G$ is connected and all proper normal subgroups of $G$ are finite. We write $X_k(G)$ and $Y_k(G)$ for the set of $k$-characters and $k$-cocharacters of $G$ respectively. For $\overline k$-characters and $\overline k$-cocharacters of $G$ we simply say characters and cocharacters of $G$ and write $X(G)$ and $Y(G)$ respectively. 

Fix a maximal $k$-torus $T$ of $G$ (such a $T$ exists by~\cite[Cor.~18.8]{Borel-AG-book}). Then $T$ is split over $k$ since $k$ is separably closed. Let $\Psi(G,T)$ denote the set of roots of $G$ with respect to $T$. We sometimes write $\Psi(G)$ for $\Psi(G,T)$. Let $\zeta\in\Psi(G)$. We write $U_\zeta$ for the corresponding root subgroup of $G$. We define $G_\zeta := \langle U_\zeta, U_{-\zeta} \rangle$. Let $\zeta, \xi \in \Psi(G)$. Let $\xi^{\vee}$ be the coroot corresponding to $\xi$. Then $\zeta\circ\xi^{\vee}:\overline k^{*}\rightarrow \overline k^{*}$ is a $k$-homomorphism such that $(\zeta\circ\xi^{\vee})(a) = a^n$ for some $n\in\mathbb{Z}$.
Let $s_\xi$ denote the reflection corresponding to $\xi$ in the Weyl group of $G$. Each $s_\xi$ acts on the set of roots $\Psi(G)$ by the following formula~\cite[Lem.~7.1.8]{Springer-book}:
$
s_\xi\cdot\zeta = \zeta - \langle \zeta, \xi^{\vee} \rangle \xi. 
$
\noindent By \cite[Prop.~6.4.2, Lem.~7.2.1]{Carter-simple-book} we can choose $k$-homomorphisms $\epsilon_\zeta : \overline k \rightarrow U_\zeta$  so that 
$
n_\xi \epsilon_\zeta(a) n_\xi^{-1}= \epsilon_{s_\xi\cdot\zeta}(\pm a)
            \text{ where } n_\xi = \epsilon_\xi(1)\epsilon_{-\xi}(-1)\epsilon_{\xi}(1).  \label{n-action on group}
$

The next result~\cite[Prop.~1.12]{Uchiyama-Nonperfect-pre} shows complete reducibility behaves nicely under central isogenies. 
\begin{defn}
Let $G_1$ and $G_2$ be reductive $k$-groups. A $k$-isogeny $f:G_1\rightarrow G_2$ is \emph{central} if $\textup{ker}\,df_1$ is central in $\mathfrak{g_1}$ where $\textup{ker}\,df_1$ is the differential of $f$ at the identity of $G_1$ and $\mathfrak{g_1}$ is the Lie algebra of $G_1$. 
\end{defn}
\begin{prop}\label{isogeny}
Let $G_1$ and $G_2$ be reductive $k$-groups. Let $H_1$ and $H_2$ be subgroups of $G_1$ and $G_2$ respectively. Let $f:G_1 \rightarrow G_2$ be a central $k$-isogeny. 
\begin{enumerate}
\item{If $H_1$ is $G_1$-cr over $k$, then $f(H_1)$ is $G_2$-cr over $k$.}
\item{If $H_2$ is $G_2$-cr over $k$, then $f^{-1}(H_2)$ is $G_1$-cr over $k$.} 
\end{enumerate}
\end{prop}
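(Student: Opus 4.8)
The plan is to reduce both parts to the functorial behaviour of parabolic and Levi subgroups under $f$, and then to run a short image/preimage chase. I would first isolate two facts: (a) $P \mapsto f(P)$ is an inclusion-preserving bijection from the set of $k$-parabolic subgroups of $G_1$ onto that of $G_2$, with inverse $Q \mapsto f^{-1}(Q)$; and (b) for any $k$-parabolic subgroup $P_1$ of $G_1$, writing $P_2 = f(P_1)$, the map $L \mapsto f(L)$ is a bijection from the $k$-Levi subgroups of $P_1$ onto those of $P_2$, with inverse $M \mapsto f^{-1}(M)$.

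The engine behind (a) and (b) is centrality of $\ker f$. Being central and finite, $\ker f$ is a subgroup scheme of $Z(G_1)$, hence of every maximal $k$-torus of $G_1$, and therefore it lies inside every $k$-parabolic and every $k$-Levi subgroup of $G_1$. Thus $f^{-1}(f(X)) = X$ whenever $X$ contains $\ker f$, so passing to the image and back recovers $X$, while $f(f^{-1}(Y)) = Y$ holds by surjectivity of $f$. For the Levi statement one also needs that $f$ carries Levi decompositions to Levi decompositions: since $\ker f$ is of multiplicative type it meets the unipotent group $R_u(P_1)$ trivially, so $f$ restricts to an isomorphism $R_u(P_1) \to R_u(P_2)$, and a short computation (using $\ker f \subseteq L_1$) shows that $f(L_1)$ is reductive with $f(L_1) \cap R_u(P_2) = 1$, i.e.\ $f(L_1)$ is a Levi subgroup of $P_2$. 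All of these constructions respect the $k$-structure because $f$ is a $k$-morphism of $k$-groups: images of $k$-subgroups are $k$-subgroups and preimages of $k$-closed subgroups are $k$-closed.

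Granting (a) and (b), part (1) follows at once. Suppose $f(H_1)$ lies in a $k$-parabolic subgroup $P_2$ of $G_2$. Then $H_1 \subseteq f^{-1}(f(H_1)) \subseteq f^{-1}(P_2) =: P_1$, which is a $k$-parabolic subgroup of $G_1$ by (a). Since $H_1$ is $G_1$-cr over $k$, there is a $k$-Levi subgroup $L_1$ of $P_1$ with $H_1 \subseteq L_1$; then by (b) $f(L_1)$ is a $k$-Levi subgroup of $f(P_1) = P_2$, and $f(H_1) \subseteq f(L_1)$, so $f(H_1)$ is $G_2$-cr over $k$. Part (2) is the mirror argument: if $f^{-1}(H_2)$ lies in a $k$-parabolic subgroup $P_1$ of $G_1$, then $H_2 = f(f^{-1}(H_2)) \subseteq f(P_1) =: P_2$, a $k$-parabolic of $G_2$; a $k$-Levi subgroup $M_2$ of $P_2$ with $H_2 \subseteq M_2$, obtained from $G_2$-cr over $k$, pulls back to a $k$-Levi subgroup $f^{-1}(M_2)$ of $f^{-1}(P_2) = P_1$ (the last equality because $\ker f \subseteq P_1$) that contains $f^{-1}(H_2)$.

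The only genuine work is establishing (b), and the delicate point there is the inseparable case, where $\ker f$ is an infinitesimal group scheme: one should either argue scheme-theoretically throughout, or note that a central finite subgroup scheme of a reductive group is automatically of multiplicative type, which is precisely what forces both $\ker f \subseteq L_1$ and $\ker f \cap R_u(P_1) = 1$. Everything else is a formal consequence of the parabolic–Levi dictionary and the definition of $G$-complete reducibility over $k$.
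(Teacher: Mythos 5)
The paper does not prove this Proposition: it is quoted verbatim from Uchiyama, \emph{Complete reducibility... nonperfect fields}, Prop.~1.12, so there is no internal proof here to weigh your attempt against. That said, your argument follows the natural route (a parabolic/Levi dictionary under $f$, then a formal image/preimage chase), and the skeleton is correct; the non-obvious content is exactly where you place it, in fact (b).

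One place you are too quick, and it is exactly where the centrality hypothesis earns its keep, is the slogan ``preimages of $k$-closed subgroups are $k$-closed.'' For nonperfect $k$ and in Borel's variety-with-$k$-structure setup there is a genuine distinction between a closed subset being cut out by polynomials with coefficients in $k$ (``$k$-closed'') and the reduced subvariety actually carrying a $k$-structure (``defined over $k$''), because the radical of a $k$-ideal need not be a $k$-ideal. For an inseparable isogeny the scheme-theoretic fibre over a $k$-point is a nonreduced thickening, so the reduced preimage $f^{-1}(M_2)$ need not inherit $k$-definedness from $M_2$ for a general isogeny --- and indeed the paper's own Remark following Proposition~4.2 exhibits counterexamples to \emph{both} parts of this Proposition for the non-central very special isogeny $\sigma$ of $F_4$ and for Frobenius. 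So the $k$-rationality of $f^{-1}(M_2)$ (and of $f(L_1)$) really does require the centrality hypothesis, and for a clean proof one should produce the preimage Levi explicitly: over $k=k_s$ every $k$-Levi of $G_2$ is $L_\mu$ for some $\mu\in Y_k(G_2)$, and since $f$ is a central isogeny the induced map $Y_k(T_1)\to Y_k(T_2)$ on cocharacter lattices of split maximal $k$-tori is injective with finite cokernel, so a positive multiple $n\mu$ lifts to $\mu'\in Y_k(T_1)$ with $L_{\mu'}=f^{-1}(L_\mu)$ and $P_{\mu'}=f^{-1}(P_\mu)$, both manifestly $k$-defined. You flag the scheme-theoretic subtlety at the end (multiplicative type of $\ker f$), but you use it only to control $\ker f\cap R_u(P_1)$, not to justify $k$-definedness of the bijection in (a) and (b); tightening that step is what a complete proof requires.
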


The next result~\cite[Thm.~1.4]{Bate-cocharacterbuildings-Arx} is used repeatedly to reduce problems on $G$-complete reducibility to those on $L$-complete reducibility where $L$ is a Levi subgroup of $G$. 

\begin{prop}\label{G-cr-L-cr}
Suppose that a subgroup $H$ of $G$ is contained in a $k$-defined Levi subgroup $L$ of $G$. Then $H$ is $G$-cr over $k$ if and only if it is $L$-cr over $k$. 
\end{prop}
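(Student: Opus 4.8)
\textbf{Proof proposal for Proposition~\ref{G-cr-L-cr}.}

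The plan is to deduce the statement from the geometric (algebraically closed) case together with the rational refinements of the cocharacter/parabolic machinery. Recall that $G$-parabolic subgroups, Levi subgroups, and destabilising cocharacters are controlled by cocharacters $\lambda \in Y(G)$: one has $P_\lambda = \{g \in G : \lim_{a\to 0}\lambda(a)g\lambda(a)^{-1} \text{ exists}\}$, with Levi factor $L_\lambda = C_G(\lambda)$, and when $\lambda$ is a $k$-cocharacter then $P_\lambda$ and $L_\lambda$ are $k$-defined; moreover every $k$-defined parabolic has the form $P_\lambda$ for some $\lambda \in Y_k(G)$, and similarly for its $k$-defined Levi subgroups. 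Since $L$ is a $k$-defined Levi subgroup of $G$, write $L = L_\mu = C_G(\mu)$ for some $\mu \in Y_k(G)$, and note that $k$-defined parabolic (resp. Levi) subgroups of $L$ are exactly the intersections $L \cap P$ (resp. $L \cap M$) for $k$-defined parabolic subgroups $P$ (resp. Levi subgroups $M$) of $G$ containing $L$, and conversely every $k$-defined parabolic $Q$ of $L$ has the form $L \cap P_\lambda$ for a suitable $\lambda \in Y_k(L) \subseteq Y_k(G)$.

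For the first implication, suppose $H \subseteq L$ is $G$-cr over $k$ and let $Q$ be a $k$-defined parabolic subgroup of $L$ containing $H$; I would write $Q = L \cap P_\lambda$ with $\lambda \in Y_k(L)$, so that $P := P_\lambda$ is a $k$-defined parabolic subgroup of $G$ containing $H$. By hypothesis there is a $k$-defined Levi subgroup $M$ of $P$ with $H \subseteq M$; since $L$ normalises $\lambda$ and hence $L \subseteq C_G(\lambda)$-conjugacy can be arranged, the standard argument is to use that $H \subseteq L \cap M$, produce a $k$-point $u \in R_u(P)(k)$ with $u M u^{-1} = L_\lambda' $ a Levi containing $L \cap M$ appropriately, and conclude $H \subseteq L \cap M$ is contained in a $k$-defined Levi of $Q = L \cap P_\lambda$. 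The cleanest route here is to invoke the rational cocharacter characterisation of $G$-cr over $k$ (e.g. \cite{Bate-geometric-Inventione}, \cite{Bate-cocharacter-Arx}): $H$ is $G$-cr over $k$ iff for every $\lambda \in Y_k(G)$ with $H \subseteq P_\lambda$ there exists $u \in R_u(P_\lambda)(k)$ with $u H u^{-1} \subseteq L_\lambda$, and analogously inside $L$. Given $\lambda \in Y_k(L)$ with $H \subseteq P_\lambda^L = L \cap P_\lambda$, the element $u \in R_u(P_\lambda)(k)$ conjugating $H$ into $L_\lambda$ can be taken inside $L \cap R_u(P_\lambda) = R_u(P_\lambda^L)$ because $H$ already lies in $L$ and the relevant fibres are $k$-split unipotent groups; this gives $uHu^{-1} \subseteq L \cap L_\lambda = L_\lambda^L$, i.e. $H$ is $L$-cr over $k$.

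For the converse, suppose $H \subseteq L$ is $L$-cr over $k$ and let $P$ be a $k$-defined parabolic subgroup of $G$ containing $H$. Here the key device is R.~Richardson's/Bate--Martin--R\"ohrle observation, valid over $k$ as well, that since $H \subseteq L$ with $L = C_G(\mu)$ a Levi and $H \subseteq P$, one may replace $P$ by a $P' \supseteq$ something conjugate so that $P \cap L$ is a $k$-defined parabolic subgroup of $L$ containing $H$; concretely, $H$ is contained in $P_\mu \cap P$-type intersections, and $L \cap P$ is a $k$-parabolic of $L$ containing $H$. Then $L$-cr over $k$ gives a $k$-defined Levi $M_L$ of $L \cap P$ with $H \subseteq M_L$, and one checks $M_L$ extends to a $k$-defined Levi $M$ of $P$ (take $M = M_L \cdot (\text{the part of } R_u(P) \text{ centralised appropriately})$, i.e. $M$ is the Levi of $P$ whose intersection with $L$ is $M_L$, available because the cocharacter cutting out $M_L$ inside $L$ also cuts out a Levi of $P$). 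Hence $H \subseteq M$ with $M$ a $k$-defined Levi of $P$, proving $H$ is $G$-cr over $k$.

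The main obstacle I anticipate is the bookkeeping in matching parabolic and Levi subgroups of $L$ with those of $G$ \emph{over $k$}: one must be careful that the conjugating elements realising the Levi subgroups can be chosen to be $k$-points and to lie in the smaller unipotent radical $R_u(P) \cap L$ rather than merely in $R_u(P)(\bar k)$. This is exactly where separably closedness of $k$ (making all relevant tori split and unipotent radicals $k$-split, so that their $k$-points are Zariski-dense and cohomology vanishes) is used, and it is the crux that upgrades the classical algebraically-closed equivalence to the rational statement. Once the correspondence $Q \leftrightarrow P$ and $M_L \leftrightarrow M$ is set up $k$-rationally, both implications are immediate from the definition of $G$-cr over $k$.
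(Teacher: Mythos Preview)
The paper does not give a proof of this proposition; it is quoted from \cite[Thm.~1.4]{Bate-cocharacterbuildings-Arx} without argument. The proof in that reference proceeds via the rational GIT characterisation (cocharacter-closedness of the orbit of a generic tuple, together with the Kempf--Hesselink optimality machinery over $k$), which is a genuinely different route from the direct parabolic/Levi matching you attempt.

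Your sketch has a real gap in the implication $L$-cr over $k$ $\Rightarrow$ $G$-cr over $k$. You assert that for an arbitrary $k$-parabolic $P$ of $G$ containing $H$, the intersection $L\cap P$ is a $k$-parabolic of $L$. This is false in general. For example, take $G=\mathrm{SL}_3$, let $L\cong GL_2$ be the standard block-diagonal Levi, and let $P$ be the stabiliser of the line $\langle e_1+e_3\rangle$; then $L\cap P$ is $2$-dimensional, hence not a parabolic of $L$. You gesture at ``replacing $P$ by a $P'$\,'', but never say how, and in any case you must ultimately produce a $k$-Levi of the \emph{original} $P$ containing $H$, so a replacement does not obviously help. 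The actual content here is to show that if $H$ fails to be $G$-cr over $k$, then a destabilising $k$-cocharacter can be chosen in $Y_k(L)$; this is exactly what the optimal-cocharacter argument in the cited reference supplies (the optimal $\lambda$ can be taken to commute with the central torus $\mu(\bar k^{*})\subseteq C_G(H)$, forcing $\lambda\in Y_k(L_\mu)=Y_k(L)$). Your second step in this direction---extending a Levi $M_L$ of $L\cap P$ to a $k$-Levi of $P$---is likewise unjustified, and fails for the same reason: the cocharacter cutting out $M_L$ inside $L$ need not give $P$ as its associated parabolic in $G$.

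The forward direction is closer to correct, but the sentence ``$u$ can be taken inside $L\cap R_u(P_\lambda)$ because $H$ already lies in $L$'' is not an argument. One way to make it honest is to apply the limit map $c_\mu\colon P_\mu\to L_\mu$ and check it preserves all the relevant containments; alternatively one again passes through the cocharacter-closure formulation, which handles both directions uniformly.
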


We recall characterisations of parabolic subgroups, Levi subgroups, and unipotent radicals in terms of cocharacters of $G$~\cite[Prop.~8.4.5]{Springer-book}. These characterisations are essential to translate results on complete reducibility into the language of GIT; see~\cite{Bate-geometric-Inventione},~\cite{Bate-uniform-TransAMS} for example. 

\begin{defn}
Let $X$ be a affine $k$-variety. Let $\phi : \overline k^*\rightarrow X$ be a $k$-morphism of affine $k$-varieties. We say that $\displaystyle\lim_{a\rightarrow 0}\phi(a)$ exists if there exists a $k$-morphism $\hat\phi:\overline k\rightarrow X$ (necessarily unique) whose restriction to $\overline k^{*}$ is $\phi$. If this limit exists, we set $\displaystyle\lim_{a\rightarrow 0}\phi(a) := \hat\phi(0)$.
\end{defn}

\begin{defn}\label{R-parabolic}
Let $\lambda\in Y(G)$. Define
$
P_\lambda := \{ g\in G \mid \displaystyle\lim_{a\rightarrow 0}\lambda(a)g\lambda(a)^{-1} \text{ exists}\}, $\\
$L_\lambda := \{ g\in G \mid \displaystyle\lim_{a\rightarrow 0}\lambda(a)g\lambda(a)^{-1} = g\}, \,
R_u(P_\lambda) := \{ g\in G \mid  \displaystyle\lim_{a\rightarrow0}\lambda(a)g\lambda(a)^{-1} = 1\}. 
$
\end{defn}
Then $P_\lambda$ is a parabolic subgroup of $G$, $L_\lambda$ is a Levi subgroup of $P_\lambda$, and $R_u(P_\lambda)$ is the unipotent radical of $P_\lambda$. If $\lambda$ is $k$-defined, $P_\lambda$, $L_\lambda$, and $R_u(P_\lambda)$ are $k$-defined~\cite[Sec.~2.1-2.3]{Richardson-conjugacy-Duke}. All $k$-defined parabolic subgroups and $k$-defined Levi subgroups of $G$ arise in this way since $k$ is separably closed. It is well known that $L_\lambda = C_G(\lambda(\overline k^*))$. Note that $k$-defined Levi subgroups of a $k$-defined parabolic subgroup $P$ of $G$ are $R_u(P)(k)$-conjugate~\cite[Lem.~2.5(\rmnum{3})]{Bate-uniform-TransAMS}. 

Recall the following geometric characterisation for complete reducibility via GIT~\cite{Bate-geometric-Inventione}. Suppose that a subgroup $H$ of $G$ is generated by $n$-tuple ${\bf h}=(h_1,\cdots, h_n)$ of elements of $G$ (or ${\bf h}$ is a generic tuple of $H$ in the sense of~\cite[Def.~9.2]{Bate-cocharacter-Arx}), and $G$ acts on $G^n$ by simultaneous conjugation. 
\begin{prop}\label{geometric}
A subgroup $H$ of $G$ is $G$-cr if and only if the $G$-orbit $G\cdot {\bf h}$ is closed in $G^n$. 
\end{prop}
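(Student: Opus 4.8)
The plan is to prove the two implications separately, working over $\overline k$ throughout (the $k$-structure plays no role in this statement) and using the Hilbert--Mumford--Kempf machinery together with the cocharacter description of parabolic and Levi subgroups from Definition~\ref{R-parabolic}. Fix a tuple ${\bf h}=(h_1,\dots,h_n)$ generating $H$ as a closed subgroup, so that ${\bf h}\in M^n$ if and only if $H\le M$ for any closed subgroup $M\le G$, and $\overline{\langle g\cdot{\bf h}\rangle}=gHg^{-1}$ for all $g\in G$. I will repeatedly use that if ${\bf h}':=\lim_{a\to0}\lambda(a)\cdot{\bf h}$ exists for some $\lambda\in Y(G)$, then on the one hand ${\bf h}\in P_\lambda^n$, so $H\le P_\lambda$, and on the other hand $\lambda(a)\cdot{\bf h}'={\bf h}'$ for every $a$ (idempotency of the limit), so ${\bf h}'\in L_\lambda^n$ and $\overline{\langle{\bf h}'\rangle}\le L_\lambda$.

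First I would show that a closed orbit forces $G$-complete reducibility. Assume $G\cdot{\bf h}$ is closed and $H\le P$ for a parabolic subgroup $P$; over $\overline k$ we may write $P=P_\lambda$ for some $\lambda\in Y(G)$. Since ${\bf h}\in P_\lambda^n$, the limit ${\bf h}':=\lim_{a\to0}\lambda(a)\cdot{\bf h}$ exists, and ${\bf h}'\in\overline{G\cdot{\bf h}}=G\cdot{\bf h}$, say ${\bf h}'=g\cdot{\bf h}$. Then $gHg^{-1}=\overline{\langle{\bf h}'\rangle}\le L_\lambda$, so $H$ is $G$-conjugate into the Levi subgroup $L_\lambda$ of $P$. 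It remains to upgrade this to $H\le L$ for some Levi subgroup $L$ of $P$: this is the standard structural fact that a subgroup of a parabolic $P$ which is $G$-conjugate into a given Levi subgroup of $P$ is already $R_u(P)$-conjugate into it (see~\cite{Bate-geometric-Inventione}), and the $R_u(P)$-conjugates of $L_\lambda$ are precisely the Levi subgroups of $P$. Hence $H$ is $G$-cr.

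Conversely, I would show that $G$-complete reducibility forces a closed orbit. Suppose $H$ is $G$-cr and, for contradiction, that $G\cdot{\bf h}$ is not closed. Then $\overline{G\cdot{\bf h}}$ contains a closed $G$-orbit $Y$ with $Y\cap G\cdot{\bf h}=\emptyset$, and by the Hilbert--Mumford theorem (in the form due to Kempf and Rousseau) there is $\lambda\in Y(G)$ with ${\bf h}'':=\lim_{a\to0}\lambda(a)\cdot{\bf h}\in Y$, in particular ${\bf h}''\notin G\cdot{\bf h}$. As noted above $H\le P_\lambda$, and $P_\lambda\ne G$ (else $\lambda$ is central and ${\bf h}''={\bf h}$), so $P_\lambda$ is a proper parabolic containing $H$. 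Since $H$ is $G$-cr, $H$ lies in some Levi subgroup $L=uL_\lambda u^{-1}$ of $P_\lambda$ with $u\in R_u(P_\lambda)$, i.e.\ $H\le L_\mu$ for $\mu=u\cdot\lambda$; then $u^{-1}\cdot{\bf h}\in L_\lambda^n$, and writing $\lambda(a)u=\bigl(\lambda(a)u\lambda(a)^{-1}\bigr)\lambda(a)$ with $\lambda(a)u\lambda(a)^{-1}\to1$ one gets $\lim_{a\to0}\lambda(a)\cdot{\bf h}=u^{-1}\cdot{\bf h}\in G\cdot{\bf h}$. This contradicts ${\bf h}''\notin G\cdot{\bf h}$, so $G\cdot{\bf h}$ is closed.

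The main obstacle is the engine of the converse: one must invoke the Hilbert--Mumford--Kempf destabilization theorem to produce a one-parameter subgroup witnessing non-closedness, and then recognise, through the dictionary $\lambda\mapsto P_\lambda$ of Definition~\ref{R-parabolic}, that this cocharacter defines a genuine \emph{proper} parabolic subgroup of $G$ containing $H$. The structural lemma used in the forward implication is more elementary, but it still rests on the structure theory of parabolic subgroups, and I would cite rather than reprove it. I would close by remarking that the argument uses only that ${\bf h}$ generates $H$, so the criterion does not depend on the choice of generating (or generic) tuple.
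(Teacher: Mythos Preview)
The paper does not prove this proposition at all; it is stated with a bare citation to~\cite{Bate-geometric-Inventione} as a known result. Your proposal is a correct outline of the standard argument from that reference: the Hilbert--Mumford--Kempf theorem for the non-closed direction, and the limit ${\bf h}'=c_\lambda({\bf h})$ landing back in the orbit for the closed direction, together with the BMR lemma that $G$-conjugacy of ${\bf h}$ and $c_\lambda({\bf h})$ forces $R_u(P_\lambda)$-conjugacy. One small point of phrasing: the ``structural fact'' you invoke is most cleanly stated (and proved in~\cite{Bate-geometric-Inventione}) for the specific tuple $c_\lambda({\bf h})$ rather than for an arbitrary $G$-conjugate lying in $L_\lambda$; since your ${\bf h}'$ is exactly $c_\lambda({\bf h})$, this does not affect the argument.
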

Combining Proposition~\ref{geometric} and a recent result from GIT~\cite[Thm.~3.3]{Bate-uniform-TransAMS} we have
\begin{prop}\label{unipotentconjugate}
Let $H$ be a subgroup of $G$. Let $\lambda\in Y(G)$. Suppose that ${\bf h'}:=\lim_{a\rightarrow 0}\lambda(a)\cdot {\bf h}$ exists. If $H$ is $G$-cr, then ${\bf h'}$ is $R_u(P_\lambda)$-conjugate to ${\bf h}$. 
\end{prop}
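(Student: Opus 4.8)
The plan is to combine the geometric characterisation of $G$-complete reducibility (Proposition~\ref{geometric}) with the GIT input \cite[Thm.~3.3]{Bate-uniform-TransAMS}, applied to the action of the reductive group $G$ on the affine variety $G^n$ by simultaneous conjugation. First I would record two elementary facts about $\mathbf{h}' := \lim_{a\to 0}\lambda(a)\cdot\mathbf{h}$. Let $\phi\colon\overline k^*\to G^n$ be the morphism $a\mapsto\lambda(a)\cdot\mathbf{h}$, and let $\hat\phi\colon\overline k\to G^n$ be its extension with $\hat\phi(0)=\mathbf{h}'$. Since $\phi(\overline k^*)\subseteq G\cdot\mathbf{h}$ and $\overline k^*$ is dense in $\overline k$, we get $\mathbf{h}'=\hat\phi(0)\in\overline{G\cdot\mathbf{h}}$. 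Moreover $\mathbf{h}'$ is fixed by $\lambda$: for $b\in\overline k^*$ the morphisms $a\mapsto\lambda(b)\cdot\phi(a)$ and $a\mapsto\phi(ba)$ coincide, so their (unique) extensions agree at $0$, giving $\lambda(b)\cdot\mathbf{h}'=\mathbf{h}'$.

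Now suppose $H$ is $G$-cr. Since $\mathbf{h}$ generates $H$ (or is a generic tuple of $H$), Proposition~\ref{geometric} shows that the orbit $G\cdot\mathbf{h}$ is closed in $G^n$, so $\overline{G\cdot\mathbf{h}}=G\cdot\mathbf{h}$ and the first paragraph gives $\mathbf{h}'\in G\cdot\mathbf{h}$; that is, $\mathbf{h}'$ is $G$-conjugate to $\mathbf{h}$. To upgrade this to $R_u(P_\lambda)$-conjugacy I would invoke \cite[Thm.~3.3]{Bate-uniform-TransAMS}: for a reductive group acting on an affine variety, if a point $v$ has closed orbit and $\lim_{a\to 0}\lambda(a)\cdot v$ exists, then that limit is $R_u(P_\lambda)$-conjugate to $v$. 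Applying this with $v=\mathbf{h}$ produces $u\in R_u(P_\lambda)$ with $u\cdot\mathbf{h}=\mathbf{h}'$, which is the assertion.

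Within this paper the only points needing care are bookkeeping: that $\mathbf{h}$ being a generating (or generic) tuple is what licenses the equivalence ``$H$ is $G$-cr $\iff$ $G\cdot\mathbf{h}$ is closed'', and that simultaneous conjugation is a rational action of $G$ on an affine variety, so that \cite[Thm.~3.3]{Bate-uniform-TransAMS} applies verbatim. The substantive content is entirely in that cited theorem; were one to reprove it, the heart of the matter — and what I would expect to be the main obstacle — is the transporter argument showing that the coset $\{g\in G : g\cdot\mathbf{h}=\mathbf{h}'\}$ of the stabiliser of $\mathbf{h}$ meets $R_u(P_\lambda)$. This uses that $\mathbf{h}'$ is $\lambda$-fixed together with the Levi decomposition $P_\lambda=R_u(P_\lambda)\rtimes L_\lambda$ and the limit homomorphism $c_\lambda\colon P_\lambda\to L_\lambda$, $g\mapsto\lim_{a\to 0}\lambda(a)g\lambda(a)^{-1}$ — essentially Kempf's argument for optimal destabilising cocharacters.
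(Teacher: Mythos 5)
Your proposal matches the paper's own argument: the paper simply states that Proposition~\ref{unipotentconjugate} follows by combining Proposition~\ref{geometric} (closedness of the orbit $G\cdot\mathbf{h}$) with \cite[Thm.~3.3]{Bate-uniform-TransAMS}, exactly as you do. The extra observations you record (that $\mathbf{h}'$ lies in $\overline{G\cdot\mathbf{h}}$ and is $\lambda$-fixed, and the sketch of the transporter/Kempf-style argument behind the cited theorem) are correct but not part of the paper's proof, which is just the one-line citation.
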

We use a rational version of Proposition~\ref{unipotentconjugate}; see~\cite[Cor.~5.1]{Bate-cocharacter-Arx},~\cite[Thm.~9.3]{Bate-cocharacter-Arx}:

\begin{prop}\label{rationalonjugacy}
Let $H$ be a subgroup of $G$. Let $\lambda\in Y_k(G)$. Suppose that ${\bf h'}:=\lim_{a\rightarrow 0}\lambda(a)\cdot {\bf h}$ exists. If $H$ is $G$-cr over $k$, then ${\bf h'}$ is $R_u(P_\lambda)(k)$-conjugate to ${\bf h}$. 
\end{prop}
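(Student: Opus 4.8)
The plan is to bypass the GIT machinery of~\cite{Bate-uniform-TransAMS} altogether and argue structurally, using only the definition of $G$-complete reducibility over $k$ together with the $R_u(P)(k)$-conjugacy of $k$-defined Levi subgroups of a $k$-defined parabolic subgroup recorded after Definition~\ref{R-parabolic}. First I would observe that, since $G$ acts on $G^n$ by simultaneous conjugation, the existence of ${\bf h'}=\lim_{a\to 0}\lambda(a)\cdot{\bf h}$ means that $\lim_{a\to 0}\lambda(a)h_i\lambda(a)^{-1}$ exists for every entry $h_i$ of ${\bf h}$, i.e.\ ${\bf h}\in P_\lambda^n$ by Definition~\ref{R-parabolic}. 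As ${\bf h}$ generates $H$ (or is a generic tuple of $H$) and $P_\lambda$ is closed, this forces $H\subseteq P_\lambda$. Because $\lambda\in Y_k(G)$, the parabolic $P_\lambda$ is $k$-defined, so the hypothesis that $H$ is $G$-cr over $k$ provides a $k$-defined Levi subgroup $M$ of $P_\lambda$ with $H\subseteq M$.

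Next I would bring in the map $c_\lambda\colon P_\lambda\to L_\lambda$, $g\mapsto\lim_{a\to 0}\lambda(a)g\lambda(a)^{-1}$. By continuity of multiplication it is a group homomorphism; it restricts to the identity on $L_\lambda$; and $\ker c_\lambda=R_u(P_\lambda)$, all immediately from Definition~\ref{R-parabolic}. In particular $c_\lambda({\bf h})={\bf h'}$, so ${\bf h'}\in L_\lambda^n$. Now $M$ and $L_\lambda$ are both $k$-defined Levi subgroups of the $k$-defined parabolic $P_\lambda$, so there is $u\in R_u(P_\lambda)(k)$ with $uMu^{-1}=L_\lambda$. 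Then $u{\bf h}u^{-1}\in(uMu^{-1})^n=L_\lambda^n$, hence $c_\lambda$ fixes it; on the other hand $u\in R_u(P_\lambda)=\ker c_\lambda$, so applying the homomorphism $c_\lambda$ gives $c_\lambda(u{\bf h}u^{-1})=c_\lambda(u)\,c_\lambda({\bf h})\,c_\lambda(u)^{-1}={\bf h'}$. Comparing the two evaluations yields $u{\bf h}u^{-1}={\bf h'}$ with $u\in R_u(P_\lambda)(k)$, which is exactly the assertion that ${\bf h'}$ is $R_u(P_\lambda)(k)$-conjugate to ${\bf h}$.

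The only genuinely nontrivial input is the $R_u(P_\lambda)(k)$-conjugacy of the two $k$-defined Levi subgroups $M$ and $L_\lambda$: over $\overline{k}$ this is classical, but the rational refinement rests on the structure of (split) connected unipotent $k$-groups and on $k$ being separably closed, and it is precisely the step that breaks down over a general base field. This is where the cited GIT results do their real work; everything else is formal. I would finish by noting that running the identical two-line computation with $R_u(P_\lambda)(k)$ replaced by $R_u(P_\lambda)$ and ``$G$-cr over $k$'' by ``$G$-cr'' recovers Proposition~\ref{unipotentconjugate} as a special case, so the rational and geometric statements admit a uniform proof.
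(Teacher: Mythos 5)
Your argument is correct, and it does not merely rephrase what is in the cited sources -- it is a genuinely more elementary route. The paper proves nothing here; it cites two results from the Bate--Martin--R\"ohrle cocharacter-closure paper, whose strategy is to work in the generality of an arbitrary affine $G$-variety $V$: one first establishes a rational Hilbert--Mumford/instability theorem (upgrading $G(k)$-conjugacy of a point and its limit to $R_u(P_\lambda)(k)$-conjugacy) and separately proves that $H$ being $G$-cr over $k$ is equivalent to cocharacter-closedness of the $G(k)$-orbit of a generic tuple. That framework has to avoid any use of Levi decompositions because none exist for a general $V$. You instead exploit the very special structure of the conjugation action on $G^n$: the retraction $c_\lambda\colon P_\lambda\to L_\lambda$ is a homomorphism, kills $R_u(P_\lambda)$, and fixes $L_\lambda$ pointwise, so once the definition of $G$-cr over $k$ hands you a $k$-defined Levi $M\supseteq H$ and the conjugacy result hands you $u\in R_u(P_\lambda)(k)$ with $uMu^{-1}=L_\lambda$, the two evaluations of $c_\lambda(u\mathbf{h}u^{-1})$ force $u\mathbf{h}u^{-1}=\mathbf{h}'$. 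You are right that the entire rational content is concentrated in the $R_u(P_\lambda)(k)$-conjugacy of $k$-defined Levis of $P_\lambda$ (recorded in the paper after Definition~\ref{R-parabolic}); everything else is formal. The trade-off is that your argument gives only this one implication and only for the conjugation action on $G^n$, whereas the GIT route in the references proves a far more general statement and also gives the converse, which is needed elsewhere in the theory; still, as a self-contained derivation of exactly this proposition (and, as you note, of Proposition~\ref{unipotentconjugate} simultaneously), your approach is cleaner and I would accept it.
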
   
\section{Complete reducibility and pseudo-reductivity}

In this section, we use the language of schemes. Recall~\cite[Sec.~A.5]{Conrad-pred-book}: 
\begin{defn}
Let $B\to B'$ be a finite flat map of noetherian rings, and $X'$ a quasi-projective $B'$-scheme. The \emph{Weil restriction} $R_{B'/B}(X')$ is a $B$-scheme of finite type satisfying the universal property $$ R_{B'/B}(X')(A)=X'(B'\otimes_B A) $$ for all $B$-algebras $A$.
\end{defn}

Now using a special case of a Weil restriction~\cite[Ex.~1.6.1]{Conrad-pred-book} we have
\begin{prop}\label{prbutnonred}
	Let $k'/k$ be a finite purely inseparable field extension with $k'\neq k$ and $G'$ a non-trivial smooth connected reductive $k'$-group. Then $G=R_{k'/k}(G')$ is a pseudo-reductive and non-reductive $k$-group.
\end{prop}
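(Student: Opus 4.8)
The plan is to verify the two required properties — pseudo-reductivity and non-reductivity — separately, since each follows from a standard structural fact about Weil restriction along a purely inseparable extension. First I would record that $G = R_{k'/k}(G')$ is a smooth connected affine $k$-group: smoothness and finite type follow from the general properties of Weil restriction applied to the smooth affine $k'$-group $G'$ (see~\cite[Sec.~A.5]{Conrad-pred-book}), connectedness follows from connectedness of $G'$ together with the fact that $R_{k'/k}$ preserves geometric connectedness in this setting, and affineness is immediate since $G'$ is affine and $k'/k$ is finite. This puts $G$ in the class of groups for which the notion of pseudo-reductivity in the sense of the definition above even makes sense.

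For pseudo-reductivity, the key input is that $R_{u,k}(G)$, the maximal smooth connected normal unipotent $k$-subgroup of $G$, is trivial. I would deduce this from the fact that $G'$ is reductive, i.e.\ $R_{u,k'}(G') = 1$, combined with the compatibility of Weil restriction with the formation of the $k$-unipotent radical: any smooth connected normal unipotent $k$-subgroup $N \subseteq G = R_{k'/k}(G')$ gives, via the adjunction/counit map, a smooth connected normal unipotent $k'$-subgroup of $G'$, which must be trivial; one then argues that $N$ itself is trivial. This is precisely the content of~\cite[Prop.~1.1.10]{Conrad-pred-book} (or the discussion surrounding~\cite[Ex.~1.6.1]{Conrad-pred-book}), so I would cite that rather than reprove it.

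For non-reductivity, the point is to exhibit a nontrivial smooth connected normal unipotent subgroup of $G_{\overline k} = R_{k'/k}(G')_{\overline k}$. Here one uses that $\overline k \otimes_k k'$ is a nonreduced Artin local $\overline k$-algebra precisely because $k'/k$ is purely inseparable and nontrivial, so $G_{\overline k} \cong R_{(\overline k \otimes_k k')/\overline k}(G'_{\overline k \otimes_k k'})$, and Weil restriction along a nonreduced finite extension of the base produces a nontrivial unipotent radical whenever $G'$ is nontrivial. Concretely, the kernel of the natural surjection $G_{\overline k} \to G'_{\overline k}$ induced by $\overline k \otimes_k k' \twoheadrightarrow \overline k$ is a nontrivial smooth connected unipotent normal $\overline k$-subgroup, so $R_u(G_{\overline k}) \neq 1$ and $G$ is not reductive. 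The main obstacle — really the only place any care is needed — is checking that this kernel is genuinely nontrivial, smooth, and connected: nontriviality uses $G' \neq 1$ and $k' \neq k$, while smoothness and connectedness of the kernel follow from the structure of Weil restriction along the truncation map of Artin local rings, and again this is exactly~\cite[Ex.~1.6.1]{Conrad-pred-book}, which I would invoke to close the argument.
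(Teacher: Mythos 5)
Your proposal is correct and takes the same route as the paper: both rely on the Weil-restriction machinery of Conrad–Gabber–Prasad, with pseudo-reductivity coming from \cite[Prop.~1.1.10]{Conrad-pred-book} and non-reductivity from the base-change analysis in \cite[Ex.~1.6.1]{Conrad-pred-book}. The paper simply cites Ex.~1.6.1 (and A.5.11(1) for smoothness) without unpacking it, whereas you spell out the details of that reference, so there is no substantive difference in approach.
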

Note that $G=R_{k'/k}(G')$ is smooth due to~\cite[Prop.~A.5.11(1)]{Conrad-pred-book}. We use the following standard result~\cite[Ex.~3.2.2(a)]{Serre-building}:
\begin{prop}\label{reptheory}
Let $H$ be a $k$-subgroup of $\text{GL}(V)$ for some finite dimensional $k$-vector space $V$. Then $H$ is $\text{GL}(V)$-cr over $k$ (resp. $\text{GL}(V)$-ir over $k$) if and only if $V$ is a semisimple (resp. irreducible) $kH$-module.
\end{prop}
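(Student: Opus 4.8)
The plan is to translate the statement into linear algebra using the standard description of parabolic and Levi subgroups of $\mathrm{GL}(V)$, and then to compare these with the lattice of $kH$-submodules of $V$. Since $k$ is separably closed, a maximal $k$-torus $T$ of $G=\mathrm{GL}(V)$ is $k$-split, every $k$-cocharacter of $G$ is $G(k)$-conjugate into $T$, and, combining Definition~\ref{R-parabolic} with the fact recorded in the excerpt that every $k$-defined parabolic and every $k$-defined Levi of $G$ is of the form $P_\lambda$, $L_\lambda$ for a $k$-cocharacter $\lambda$, one gets the following dictionary. The $k$-defined parabolic subgroups of $\mathrm{GL}(V)$ are exactly the stabilisers of flags $0=W_0\subsetneq W_1\subsetneq\cdots\subsetneq W_m=V$ of $k$-subspaces, and the $k$-defined Levi subgroups of such a $P=\mathrm{Stab}(W_\bullet)$ are exactly the stabilisers of direct-sum decompositions $V=\bigoplus_{i=1}^{m}U_i$ by $k$-subspaces with $W_j=\bigoplus_{i\le j}U_i$ (for such a $\lambda$ the spaces $U_i$ are its weight spaces, $P_\lambda$ stabilises the associated flag, $L_\lambda=\prod_i \mathrm{GL}(U_i)$, and $R_u(P_\lambda)$ is its unipotent part; all $k$-defined Levis of $P$ are obtained from this one by $R_u(P)(k)$-conjugation, which preserves $k$-rationality of subspaces). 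Consequently $H$ is contained in $P=\mathrm{Stab}(W_\bullet)$ if and only if every $W_j$ is a $kH$-submodule of $V$, and $H$ is contained in a $k$-defined Levi of $P$ if and only if there is a decomposition $V=\bigoplus_i U_i$ into $kH$-submodules splitting the flag $W_\bullet$.

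Granting this dictionary, I would argue as follows. For the implication $(\Leftarrow)$, assume $V$ is a semisimple $kH$-module and $H\le P=\mathrm{Stab}(W_\bullet)$ with each $W_j$ a $kH$-submodule; since submodules of a semisimple module are semisimple and admit submodule complements, one chooses inductively $kH$-submodules $U_j$ with $W_j=U_j\oplus W_{j-1}$, and then $V=\bigoplus_j U_j$ yields a $k$-defined Levi of $P$ containing $H$, so $H$ is $\mathrm{GL}(V)$-cr over $k$. For the implication $(\Rightarrow)$, assume $H$ is $\mathrm{GL}(V)$-cr over $k$ and let $W$ be a $kH$-submodule with $0\neq W\neq V$; then $H$ lies in the proper $k$-defined parabolic $\mathrm{Stab}(0\subsetneq W\subsetneq V)$, hence in a $k$-defined Levi of it, which by the dictionary produces a $kH$-submodule $W'$ with $V=W\oplus W'$. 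Thus every $kH$-submodule of $V$ is a direct summand, and the standard fact that this forces semisimplicity (induction on $\dim_k V$ within the abelian category of finite-dimensional rational $k$-representations of $H$) finishes this direction. The $\mathrm{GL}(V)$-irreducibility statement falls out of the same dictionary: $H$ is $\mathrm{GL}(V)$-ir over $k$ if and only if it lies in no proper $k$-defined parabolic, if and only if $V$ has no $kH$-submodule besides $0$ and $V$, that is, $V$ is an irreducible $kH$-module.

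The only step with genuine content is establishing the dictionary in the first paragraph, i.e.\ checking that being defined over $k$ on the group side corresponds precisely to being defined over $k$ on the subspace and decomposition side. This is exactly where separable closedness of $k$ is used --- it makes the maximal torus split and the weight-space decomposition of any $k$-cocharacter automatically $k$-rational --- and the excerpt's remark that all $k$-defined parabolics and Levis are of the form $P_\lambda$, $L_\lambda$ closes the loop. After that the argument is elementary module theory, and I expect no further obstacle.
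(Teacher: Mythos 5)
The paper offers no proof of this proposition: it is stated as a standard fact and cited to Serre's lecture notes \cite[Ex.~3.2.2(a)]{Serre-building}. Your argument is the standard one that underlies that reference, and it is correct: the identification of $k$-defined parabolics of $\mathrm{GL}(V)$ with stabilizers of $k$-flags and of $k$-defined Levis with stabilizers of $k$-rational splittings of those flags is exactly the dictionary one needs, and (since $k$ is separably closed, so every $k$-parabolic/Levi arises from a $k$-cocharacter, with $k$-rational weight spaces) the translation to the module-theoretic characterisation of semisimplicity and irreducibility goes through as you describe. One small point worth spelling out if you wrote this up in full: in your $(\Rightarrow)$ direction you should make explicit that the complement $W'$ produced by the $k$-defined Levi is itself a $k$-defined $H$-invariant subspace (i.e.\ a genuine $kH$-submodule), which follows from the same dictionary, so that the reduction to ``every $kH$-submodule is a $kH$-direct summand'' is rigorous before invoking the standard induction.
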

Combining~Propositions~\ref{prbutnonred} and~\ref{reptheory} together with the argument in~\cite[Sec.~5.2.1]{Bate-Stewart-Gems} (we reproduce the argument of~\cite[Sec.~5.2.1]{Bate-Stewart-Gems} to make this paper self-contained), we find: 

\begin{proof}[Proof of Proposition \ref{GcrandpseudoredGLpm}]
Let $k'/k$ be a purely inseparable field extension of degree $s$ in characteristic $p$. Fix a faithful action of $\mathbb{G}_m(k')$ on $\mathbb{G}_a(k')$ (we regard $\mathbb{G}_m(k')$ and $\mathbb{G}_a(k')$ as $k'$-groups).
Then this action induces an action of $H=R_{k'/k}(\mathbb{G}_m)$ on the $[k':k]$-dimensional $k$-vector group $R_{k'/k}(\mathbb{G}_a)$, obtaining an embedding of $H=R_{k'/k}(\mathbb{G}_m)$ in $G=GL_{p^s}$. 

It is easy to see that there are exactly two $\mathbb{G}_m(k')$-orbits on $\mathbb{G}_a(k')$. Thus there are exactly two $H(k)=(R_{k'/k}(\mathbb{G}_m))(k)$-orbits on $G(k)=(R_{k'/k}(\mathbb{G}_a))(k)$. This shows that $G$ is an irreducible $kH$-module and then $H$ is $G$-ir over $k$ by Proposition~\ref{reptheory}. Note that $H$ is not reductive by Proposition~\ref{prbutnonred}, so it is not $G$-cr by~\cite[Prop.~4.1]{Serre-building}.
\end{proof}
\section{$G$-cr vs $G$-cr over $k$ (Proof of Theorem~\ref{F4example})}

Let $G$ be a simple algebraic group of type $F_4$ defined over a nonperfect field $k$ of characteristic $2$. Fix a maximal torus $T$ of $G$ and a Borel subgroup $B$ of $G$ containing $T$. Let $\Psi(G)=\Psi(G,T)$ be the set of roots corresponding to $T$, and $\Psi(G)^{+}=\Psi(G,B,T)$ be the set of positive roots of $G$ corresponding to $T$ and $B$. The following Dynkin diagram defines the set of simple roots of $G$.
\begin{figure}[!h]
	\centering
	\scalebox{0.7}{
		\begin{tikzpicture}
		\draw (1,0) to (2,0);
		\draw (2,0.1) to (3,0.1);
		\draw (2,-0.1) to (3,-0.1);
		\draw (3,0) to (4,0);
		\draw (2.4,0.2) to (2.6,0);
		\draw (2.4,-0.2) to (2.6,0);
		\fill (1,0) circle (1mm);
		\fill (2,0) circle (1mm);
		\fill (3,0) circle (1mm);
		\fill (4,0) circle (1mm);
		\draw[below] (1,-0.2) node{$\alpha$};
		\draw[below] (2,-0.2) node{$\beta$};
		\draw[below] (3,-0.2) node{$\gamma$};
		\draw[below] (4,-0.2) node{$\delta$};
		\end{tikzpicture}}
\end{figure}

We label $\Psi(G)^{+}$ as in the following. The corresponding negative roots are labelled accordingly. For example, the roots $1$, $2$, $3$, $17$ correspond to $\alpha$, $\beta$, $\gamma$, $\delta$ respectively.
\begin{table}[!h]
	\begin{center}
		\scalebox{0.7}{
			\begin{tabular}{cccccc}
				\rootsF{1}{1}{0}{0}{0}&\rootsF{2}{0}{1}{0}{0}&\rootsF{3}{0}{0}{1}{0}&\rootsF{4}{1}{1}{0}{0}&\rootsF{5}{0}{1}{1}{0}&\rootsF{6}{1}{1}{1}{0}\\
				\rootsF{7}{0}{1}{2}{0}&\rootsF{8}{1}{1}{2}{0}&\rootsF{9}{1}{2}{2}{0}&\rootsF{10}{0}{1}{2}{2}&\rootsF{11}{1}{1}{2}{2}&\rootsF{12}{1}{2}{2}{2}\\
				\rootsF{13}{1}{2}{3}{2}&\rootsF{14}{1}{2}{4}{2}&\rootsF{15}{1}{3}{4}{2}&\rootsF{16}{2}{3}{4}{2}&\rootsF{17}{0}{0}{0}{1}&\rootsF{18}{0}{0}{1}{1}\\
				\rootsF{19}{0}{1}{1}{1}&\rootsF{20}{1}{1}{1}{1}&\rootsF{21}{0}{1}{2}{1}&\rootsF{22}{1}{1}{2}{1}&\rootsF{23}{1}{2}{2}{1}&\rootsF{24}{1}{2}{3}{1}\\
			\end{tabular}
		}
	\end{center}
\end{table}

Let $\lambda=13^{\vee}= 2\alpha^{\vee}+4\beta^{\vee}+3\gamma^{\vee}+2\delta^{\vee}$. Then 
$P_\lambda=\langle T, U_{\zeta}\mid \zeta\in \Psi(G)^{+}\cup \{-1,\cdots,-9\} \rangle$, 
$L_\lambda=\langle T, U_{\zeta}\mid \zeta\in \{\pm 1,\cdots,\pm 9\} \rangle$, 
and $R_u(P_\lambda)=\langle U_{\zeta} \mid \zeta \in \Psi(G)^{+}\backslash \{1, \cdots, 9\} \rangle$. Note that $L_\lambda$ is of type $B_3$. In this section, we use the commutator relations~\cite[Lem.~32.5, Props.~33.3 and 33.4]{Humphreys-book1} repeatedly. Let
\begin{equation*}
	M=\langle \epsilon_{2}(x_1), \epsilon_{-2}(x_2), \epsilon_{1}(x_3)\epsilon_{3}(x_3), \epsilon_{-1}(x_4)\epsilon_{-3}(x_4)\mid x_i\in \overline{k}\rangle.
\end{equation*}
Using the commutator formulae for root elements of $G$ (we used those found in \textsf{Magma}~\cite{Magma}), it is straightforward to check that the above generators for $M$ satisfy commutator relations, so that $M$ is a simple algebraic group of type $G_2$ with the given generators as root elements, cf.~\cite[pp.~72--77]{Carter-simple-book}. Let $a\in k\backslash k^2$ and $v(\sqrt a) = \epsilon_{20}(\sqrt a)\epsilon_{21}(\sqrt a)$. In the rest of the paper, the dot action always represents simultaneous conjugation. Define
\begin{equation*}
H := v(\sqrt a)\cdot M=\langle \epsilon_{2}(x_1), \epsilon_{-2}(x_2), \epsilon_{1}(x_3)\epsilon_{3}(x_3)\epsilon_{14}(ax_3), \epsilon_{-1}(x_4)\epsilon_{-3}(x_4)\epsilon_{12}(ax_4)\mid x_i\in \overline{k}\rangle.
\end{equation*}
The first main result in this section is
\begin{prop}\label{FirstMain}
$H$ is connected, $k$-defined, and $G$-cr, but not $G$-cr over $k$.
\end{prop}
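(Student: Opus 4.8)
The plan is to verify each of the four assertions in turn, following the blueprint used in the $D_4$, $G_2$, $E_6$, and $E_7$ examples. First, connectedness: $H$ is the image of the connected group $M$ (which is of type $G_2$, hence connected) under conjugation by $v(\sqrt a)$, so $H\cong M$ as abstract groups and is connected; one can also realise $H$ as the image of a morphism from a product of one-parameter additive/multiplicative groups. Second, $k$-definedness: writing out the generators of $H$ explicitly — which the statement already does via the commutation relations, noting $\epsilon_{14}(ax_3)$ and $\epsilon_{12}(ax_4)$ appear with coefficient $a\in k$ rather than $\sqrt a$ — shows that $H$ is defined by polynomials over $k$ in the $x_i$; the point is that although $v(\sqrt a)$ is not a $k$-point, the conjugate $v(\sqrt a)\cdot M$ has $k$-structure because the ``$\sqrt a$'' contributions pair up into honest powers of $a$. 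This is the standard trick (a conjugate of a $k$-group by a non-$k$-point that nonetheless descends to $k$), and it is exactly where nonseparability of the $G_2$-subgroup inside $R_u(P_\lambda)$ (or inside $G$) is used.

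Third, $G$-cr: here I would first show $M$ is $G$-cr. Since $M\cong G_2$ is semisimple and sits inside the $B_3$-Levi $L_\lambda$ in a way one can identify (a $G_2$ inside $B_3$, the short-root subsystem subgroup), Proposition~\ref{G-cr-L-cr} reduces $G$-complete reducibility of $M$ to $L_\lambda$-complete reducibility, and a semisimple subgroup not lying in a proper parabolic of $B_3$ (or known to be $L_\lambda$-cr by a standard argument, e.g. $G_2$ is $L_\lambda$-ir) gives the claim. Then, because $H$ is a $G(\overline k)$-conjugate of $M$ (conjugate by $v(\sqrt a)\in R_u(P_\lambda)(\overline k)\subseteq G(\overline k)$), and $G$-complete reducibility is a conjugation-invariant property over $\overline k$, $H$ is $G$-cr. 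Fourth, not $G$-cr over $k$: I would exhibit a proper $k$-defined parabolic $P$ of $G$ containing $H$ and show $H$ lies in no $k$-defined Levi of $P$. The natural candidate is $P_\lambda$ itself (or a suitable $k$-parabolic built from the roots involved): the extra terms $\epsilon_{14}(ax_3)$, $\epsilon_{12}(ax_4)$ live in $R_u(P_\lambda)$, so $H\le P_\lambda$; to rule out a $k$-defined Levi, use that $k$-defined Levi subgroups of $P_\lambda$ are $R_u(P_\lambda)(k)$-conjugate (stated in the excerpt), together with the rational conjugacy result Proposition~\ref{rationalonjugacy}: if $H$ were in a $k$-Levi, then $\lim_{a\to 0}\lambda(a)\cdot\mathbf h$ would be $R_u(P_\lambda)(k)$-conjugate to $\mathbf h$, and one checks by direct computation with the commutation relations that the only element conjugating the limit ($=$ a generic tuple of $M$) back to $\mathbf h$ is $v(\sqrt a)$ up to $\mathbf h$-centralising factors, which is not a $k$-point — a contradiction unless $a\in k^2$.

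The computation in the last step is the crux and the main obstacle: one must pin down $C_{R_u(P_\lambda)}(M)$ (or $C_{R_u(P_\lambda)}(\mathbf h')$) precisely and show that the transporter from $\mathbf h'$ to $\mathbf h$ inside $R_u(P_\lambda)$ is the single coset $v(\sqrt a)\,C_{R_u(P_\lambda)}(\mathbf h')$, then verify this coset contains no $k$-point. This is where the nonseparability of the $G_2$-action on $R_u(P_\lambda)$ enters quantitatively — the centraliser in the group is smaller than the Lie-algebra centraliser would predict, which is what forces the transporter element to involve $\sqrt a$. I would carry this out by the same explicit root-group bookkeeping as in~\cite{Uchiyama-Separability-JAlgebra},~\cite{Uchiyama-Nonperfect-pre},~\cite{Uchiyama-Nonperfectopenproblem-D4}, using Proposition~\ref{geometric} and Proposition~\ref{unipotentconjugate} over $\overline k$ to get $G$-cr and their rational counterpart Proposition~\ref{rationalonjugacy} to block $G$-cr over $k$, and leaving the lengthy commutation-relation verifications as routine. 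Finally, Proposition~\ref{isogeny} lets one pass between $F_4$ and its relevant subgroups without worrying about the (non-)central isogeny issues that arise in characteristic $2$.
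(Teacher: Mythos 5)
Your proposal follows essentially the same route as the paper: connectedness and $k$-definedness read off from the explicit root-group description (the $\sqrt a$-contributions combine into coefficients $a\in k$), $G$-complete reducibility via Proposition~\ref{G-cr-L-cr} using that $M$ is $L_\lambda$-ir and that $H=v(\sqrt a)\cdot M$ is $G(\overline k)$-conjugate to $M$, and failure of $G$-cr over $k$ by applying Proposition~\ref{rationalonjugacy} to a suitable generic tuple of $H$ and showing via the commutation relations that any $R_u(P_\lambda)(k)$-element carrying the limit tuple back to $\mathbf h$ would force $a\in k^2$, a contradiction. The only stray remark is the closing appeal to Proposition~\ref{isogeny}, which plays no role here and can be dropped.
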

\begin{proof}
It is clear that $H$ is generated by connected subgroups, each of which is defined over $k$, so, $H$ is connected and $k$-defined by~\cite[AG.~11]{Borel-AG-book}. Also $H$ is $G$-cr since $M$ is $L_\lambda$-ir by~\cite[Table~10, ID~3]{Litterick-Thomas-ex-groups}. We show that $H$ is not $G$-cr over $k$. Suppose the contrary. Choose $b\in k$ with $b^3=1$ and $b\neq 1$. Let $\bold{h}$ be a generic tuple of $H$ containing $\beta^{\vee}(b)$ and $\epsilon_{1}(1)\epsilon_{3}(1)\epsilon_{14}(a)$. Then $\bold{h'} := \lim_{a\rightarrow 0}\lambda(a)\cdot \bold{h}$ exists since $H<P_\lambda$. Then by Proposition~\ref{rationalonjugacy}, $\bold{h'}$ must be $R_u(P_\lambda)(k)$-conjugate to $\bold{h}$. Let $\bold{v} = (\beta^{\vee}(b), \epsilon_{1}(1)\epsilon_{3}(1)\epsilon_{14}(a))$. Then $\bold{v'}:=\lim_{a\rightarrow 0}\lambda(a)\cdot\bold{v}=(\beta^{\vee}(b), \epsilon_{1}(1)\epsilon_{3}(1))$. Thus there exists an element $u\in R_u(P_\lambda)(k)$ with $\bold{v} = u\cdot \bold{v'}$, which implies that $u$ commutes with $\beta^{\vee}(b)$. By~\cite[Prop.~8.2.1]{Springer-book} we can set $u=\prod_{i=10}^{24}\epsilon_{i}(x_i)$ for some $x_i\in k$. Then $x_{i}=0$ for $i\in\{11,12,14,15,18,19,22,23\}$. The equation $\bold{v} = u\cdot \bold{v'}$ also implies
\begin{align}\label{importantEq}
\epsilon_{1}(1)\epsilon_{3}(1)\epsilon_{14}(a) &= (\epsilon_{10}(x_{10})\epsilon_{13}(x_{13})\epsilon_{16}(x_{16})\epsilon_{17}(x_{17})\epsilon_{20}(x_{20})\epsilon_{21}(x_{21})\epsilon_{24}(x_{24}))\cdot (\epsilon_{1}(1)\epsilon_{3}(1))\nonumber \\
&=\epsilon_{1}(1)\epsilon_{3}(1)\epsilon_{11}(x_{10})\epsilon_{14}(x_{21}^2)\epsilon_{18}(x_{17})\epsilon_{22}(x_{20}+x_{21}).
\end{align}
This means $a=x_{21}^2$, which is a contradiction. 
\end{proof}

\begin{rem}
From the calculation above, we see that the curve $C(x):=\{\epsilon_{20}(x)\epsilon_{21}(x)\}$ is not contained $C_{R_u(P_\lambda)}(M)$, but the corresponding element in $\textup{Lie}(R_u(P_\lambda))$, that is, $e_{20}+e_{21}$ is in $\mathfrak{c}_{\textup{Lie}(R_u(P_\lambda))}(M)$. Then the argument in the proof of~\cite[Prop.~3.3]{Uchiyama-Separability-JAlgebra} shows that $M$ (hence $H$) acts nonseparably on $R_u(P_\lambda)$. 
\end{rem}

We move on to the second main result in this section. We use the same $k$, $a$, $b$, $G$, $M$, and $\lambda$ as above. Let $v(\sqrt a)=\epsilon_{-20}(\sqrt a)\epsilon_{-21}(\sqrt a)$. Define
\begin{equation*}
H' := \langle v(\sqrt a)\cdot M, \epsilon_{18}(x) \mid x\in \bar k \rangle.
\end{equation*}
\begin{prop}\label{SecondMain}
$H'$ is connected, $k$-defined, and $G$-cr over $k$, but not $G$-cr. 
\end{prop}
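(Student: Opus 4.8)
The plan is to mirror the structure of Proposition~\ref{FirstMain}, but with the roles of $G$-cr and $G$-cr over $k$ reversed, using the ``vice versa'' direction of the nonseparability mechanism. First I would show that $H$ is connected and $k$-defined: connectedness follows because $H$ is generated by the connected group $v(\sqrt a)\cdot M$ (a conjugate of the connected group $M$ of type $G_2$) together with the one-parameter root subgroup $U_{18}$, and $k$-definedness follows from the fact that $a\in k$ so the defining tuple consists of $k$-points (note $v(\sqrt a)\cdot M$ is $k$-defined even though $v(\sqrt a)$ is not a $k$-point, exactly as in Proposition~\ref{FirstMain}). I would also identify the abstract structure of $H$; presumably $H$ is (isogenous to, or closely modelled on) the subgroup $v(\sqrt a)\cdot M$ with an extra unipotent direction adjoined, and one checks via the commutation relations that adjoining $\epsilon_{18}(x)$ produces a well-defined closed connected subgroup.

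Next I would prove $H$ is $G$-cr over $k$. The natural route is to exhibit a $k$-defined parabolic $P$ of $G$ with $H\le P$ and show $H$ is $L$-ir over $k$ for some $k$-Levi $L$ of a minimal such $P$, or more directly to show $H$ is not contained in any proper $k$-parabolic and hence $G$-ir over $k$; alternatively, locate $H$ inside a $k$-Levi subgroup and invoke Proposition~\ref{G-cr-L-cr}. Given that the first construction used $\lambda = 13^\vee$ with $L_\lambda$ of type $B_3$, I expect here that the relevant containment places $H$ in (a conjugate of) a $k$-parabolic whose Levi is again of type $B_3$ (or $C_3$), and one checks the relevant module is semisimple/irreducible over $k$, perhaps again citing a table from~\cite{Litterick-Thomas-ex-groups}. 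The key point is that over $k$ the ``extra'' root directions $\pm 20, \pm 21$ and $18$ conspire so that no $k$-Levi can contain $H$ properly, forcing $k$-irreducibility.

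Then I would show $H$ is \emph{not} $G$-cr by applying the geometric criterion: pick a cocharacter $\mu\in Y(G)$ (defined over $\bar k$ but crucially not over $k$, e.g.\ something like a $\bar k$-conjugate of $\lambda$ by an element involving $\sqrt a$) such that $H\le P_\mu$ and $\mathbf{h}' := \lim_{a\to 0}\mu(a)\cdot\mathbf h$ exists but $\mathbf h'$ is \emph{not} $R_u(P_\mu)$-conjugate to $\mathbf h$; by the contrapositive of Proposition~\ref{unipotentconjugate} this shows $H$ is not $G$-cr. Concretely, I would compute $\lim_{a\to 0}\mu(a)\cdot v(\sqrt a)$ and show it lands in $M$ (killing the $\sqrt a$-components), while the $\epsilon_{18}$ direction degenerates in a way that cannot be undone by $R_u(P_\mu)$-conjugation—this is the nonseparability of the $M$-action on $R_u(P_\mu)$ rearing its head again, now obstructing geometric complete reducibility rather than rational complete reducibility. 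The precise computation would mimic equation~\eqref{importantEq}, tracking which root elements of $R_u(P_\mu)$ can conjugate $\mathbf h'$ back to $\mathbf h$ and deriving a contradiction of the form ``$a$ is a square in $\bar k$ but the coefficient constraints force an incompatibility,'' or rather an incompatibility that persists even over $\bar k$ because $\epsilon_{18}$ sits outside the image of the relevant commutators.

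The main obstacle, I expect, is the bookkeeping in the non-$G$-cr direction: one must choose the cocharacter $\mu$ (and the generic tuple $\mathbf h$) so that the limit exists, so that the degeneration genuinely changes the $R_u(P_\mu)$-orbit over $\bar k$, and so that the relevant commutation relations in $F_4$ in characteristic $2$ produce the needed obstruction—this requires carefully selecting which root subgroups appear and verifying, via~\cite[Lem.~32.5, Props.~33.3, 33.4]{Humphreys-book1}, that the element $\epsilon_{18}(x)$ (or its limit) is not absorbed. A secondary subtlety is confirming that the \emph{same} $H$ is $G$-cr over $k$: one must be sure that the extra generator $U_{18}$, which is what breaks $G$-completely reducibility over $\bar k$, does not also create a proper $k$-parabolic overgroup without a matching $k$-Levi; this is where the interplay between the $\sqrt a$-twist of $M$ and the $U_{18}$ direction must be exactly right, and I anticipate the verification reduces (as in the rest of the paper) to a short module-theoretic computation over $k$ combined with Propositions~\ref{G-cr-L-cr} and~\ref{geometric}.
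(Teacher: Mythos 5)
Your overall plan has the right shape for the connectedness, $k$-definedness and the ``$G$-cr over $k$'' claim, but it diverges from the paper on the key step, and the alternative you sketch is not substantiated.

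For the ``$G$-cr over $k$'' direction, the paper does not locate $H$ in a $k$-Levi or use a module-semisimplicity table; that route is closed here because $v(\sqrt a)^{-1}\cdot H$ contains root elements (the $\epsilon_{18}(x)\epsilon_{-5}(\sqrt a x)$ part) lying in $R_u(P_\lambda)$, so $H$ sits inside the parabolic $v(\sqrt a)\cdot P_\lambda$ but in no Levi subgroup of it. The paper instead shows $H$ is $G$-\emph{irreducible} over $k$: using $C_G(M)^\circ = G_{13}$ and the Bruhat decomposition in $G_{13}$, it proves $v(\sqrt a)\cdot P_\lambda$ is the \emph{unique} proper parabolic of $G$ containing $H$, and this parabolic is not $k$-defined; hence $H$ lies in no proper $k$-parabolic. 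You mention this route as one option, which is correct, but you should drop the Levi/module alternative.

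The larger gap is in the ``not $G$-cr'' direction. You propose to mimic the GIT limit argument of Proposition~\ref{FirstMain} along a $\bar k$-cocharacter, but you yourself observe that the obstruction there was $a\notin k^2$, which vanishes over $\bar k$; your replacement obstruction (``$\epsilon_{18}$ sits outside the image of the relevant commutators'') is not made precise and is not obviously true. The paper does something different and cleaner: it computes the connected centraliser. From the table, $C_G(M)^\circ$ has type $A_1$, and a direct commutator check gives $C_G(M)^\circ = G_{13}$; adjoining the generator $\epsilon_{18}(x)$ kills the $U_{-13}$ half of this $A_1$ (since $U_{18}$ does not centralise $U_{-13}$), so $C_G(H)^\circ = v(\sqrt a)\cdot U_{13}$ is \emph{unipotent}. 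By Borel--Tits, a nontrivial unipotent subgroup is never $G$-cr; then normality of $C_G(H)^\circ$ in $C_G(H)$ plus~\cite[Ex.~5.20]{Bate-uniform-TransAMS} shows $C_G(H)$ is not $G$-cr, and~\cite[Cor.~3.17]{Bate-geometric-Inventione} finally gives that $H$ is not $G$-cr. This centraliser/Borel--Tits chain is the missing idea; without it your proposal would require building, from scratch, a limit argument over $\bar k$ whose obstruction you have not identified. As a minor point, the phrase ``defined over $\bar k$ but crucially not over $k$'' in your choice of $\mu$ is a red herring: for geometric (non-rational) complete reducibility all cocharacters are available and $k$-definedness of $\mu$ plays no role, so ``not $k$-defined'' cannot be what makes the argument work.
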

\begin{proof}
We have
\begin{equation*}
v(\sqrt a)\cdot M = \langle \epsilon_{2}(x_1), \epsilon_{-2}(x_2), \epsilon_{1}(x_3)\epsilon_{3}(x_3)\epsilon_{-12}(ax_3), \epsilon_{-1}(x_4)\epsilon_{-3}(x_4)\epsilon_{-14}(ax_4)\mid x_i\in \bar k\rangle.
\end{equation*}
Since $H'$ is generated by $k$-defined connected subgroups of $G$, it is connected and $k$-defined by~\cite[AG.~11]{Borel-AG-book}. In the following, we use the nonseparability of $M$ in $G$ again. We show that $H'$ is not $G$-cr. From~\cite[Table 38, $X=G_2$, ID 3, $p=2$]{Litterick-Thomas-ex-groups} $C_G(M)^{\circ}$ is of type $A_1$. By a direct computation using commutator relations, we have $G_{13}\leq C_G(M)^{\circ}$. Then $C_G(M)^{\circ}=G_{13}$. Now an easy calculation gives $C_G(H')^{\circ}=v(\sqrt a)\cdot U_{13}$. (The point is that $U_{-13}$ is not centralised by $U_{18}$.) Thus $C_G(H')^{\circ}$ is unipotent. Then by the classical result of Borel and Tits~\cite[Prop.~3.1]{Borel-Tits-unipotent-invent}, $C_G(H')^{\circ}$ is not $G$-cr. Since $C_G(H')^{\circ}$ is a normal subgroup of $C_G(H')$, by~\cite[Ex.~5.20]{Bate-uniform-TransAMS}, $C_G(H')$ is not $G$-cr. Then $H'$ is not $G$-cr by~\cite[Cor.~3.17]{Bate-geometric-Inventione}. Now we show that $H'$ is $G$-cr over $k$. Note that
\begin{equation*}
v(\sqrt a)^{-1}\cdot H' = \langle M, \epsilon_{18}(x)\epsilon_{-5}(\sqrt a x)\mid x\in \bar k\rangle.
\end{equation*}  
This shows that $v(\sqrt a)^{-1}\cdot H' \leq P_\lambda$. Thus $H'\leq v(\sqrt a)\cdot P_\lambda$. Using the argument in~\cite[Claim~3.6]{Uchiyama-Nonperfectopenproblem-D4} word-for-word, we have that $P_\lambda$ is not $k$-defined. (The same argument works since $v(\sqrt a)$ is not a $k$-point and $v(\sqrt a)\not\in P_\lambda$.) In the following we show that $v(\sqrt a)\cdot P_\lambda$ is the unique proper parabolic subgroup of $G$ containing $H'$, which implies that $H'$ is $G$-ir over $k$. 

Let $P_\mu$ be a proper parabolic subgroup containing $v(\sqrt a)^{-1}\cdot H'$. Then $M\leq P_\mu$. Since $M$ is $G$-cr, there exists a Levi subgroup $L$ of $P_\mu$ containing $M$. Since Levi subgroups of $P_\mu$ are $R_u(P_\mu)$-conjugate, we may assume $L=L_\mu$. Note that $L_\mu = C_G(\mu(\overline k^*))$, so $\mu(\overline k^*)$ must centralise $M$. Since $C_{G}(M)^{\circ}=G_{13}$, we have $\mu=g\cdot 13^{\vee}=g\cdot \lambda$ for some $g\in G_{13}$. Using the Bruhat decomposition, $g$ is of one of the following forms:
\begin{align*}
&\;g=\lambda(t)\epsilon_{13}(x_1)\textup{ or}\\
&\;g=\epsilon_{13}(x_1) n_{13}\lambda(t)\epsilon_{13}(x_2)\\
&\text{for some } x_1,x_2\in \overline{k} \text{ and } t\in \overline{k}^{*}.
\end{align*}
We rule out the second case. Suppose $g$ is of the second form. We have $\epsilon_{18}(1)\epsilon_{-5}(\sqrt a)\leq v(\sqrt a)\cdot H'\leq P_\mu= P_{g\cdot \lambda}=g\cdot P_{\lambda}$. So it is enough to show that $g^{-1}\cdot \epsilon_{18}(1)\epsilon_{-5}(\sqrt a)\not\in P_\lambda$. Since $U_{13}$ and $\lambda(\overline k^*)$ are contained in $P_{\lambda}$, we can assume $g=n_{13}$. A direct computation shows that $n_{13}^{-1}\cdot \epsilon_{18}(1)\epsilon_{-5}(\sqrt a)=\epsilon_{-23}(1)\epsilon_{-5}(\sqrt a)\not\in P_\lambda$. Thus $g$ is of the first form, but this implies $P_\mu = P_\lambda$. We are done.
\end{proof}

\begin{rem}
One might try to get another example of $H$ that is $G$-cr but not $G$-cr over $k$ (or a subgroup $H'$ that is $G$-cr over $k$ but not $G$-cr) by applying the special graph automorphism $\sigma$ of $F_4$ (in the sense of~\cite[Prop.~12.3.3]{Carter-simple-book}) on $H$ in Proposition~\ref{FirstMain} or on $H'$ in Proposition~\ref{SecondMain}. Remember that $\sigma(\epsilon_\zeta(x))=\epsilon_{\sigma(\zeta)}(x^{f(\zeta)})$ where $f(\zeta)=2$ if $\zeta$ is short and $f(\zeta)=1$ if $\zeta$ is long (we abuse the notation $\sigma$ for an automorphism of $\Psi(G)$). This method fails in both cases since $\sigma(v(\sqrt a))$ becomes a $k$-point. 
\end{rem}
\begin{rem}
The last remark gives a first counterexample to the following~\cite[Open Problem.~3.14]{Uchiyama-Nonperfect-pre} that asked: does the second part of Proposition~\ref{isogeny} hold without assuming $f$ central? We supply some details. We set $f=\sigma$. Then $\sigma$ is a (non-central) $k$-isogeny. We use $H$, $M$, and $L_\lambda$ in the proof of Proposition~\ref{FirstMain}. We have shown that $H$ is not $G$-cr over $k$. It is easy to see that $\sigma(M)$ is of type $G_2$ and is contained in $\sigma(L_\lambda)$ of type $C_3$. By~\cite[Table 9, ID 4]{Litterick-Thomas-ex-groups}, $\sigma(M)$ is $\sigma(L_\lambda)$-ir, thus $\sigma(L_\lambda)$-ir over $k$. Then $\sigma(M)$ is $G$-cr over $k$ and then, $\sigma(H)$ is $G$-cr over $k$ since $\sigma(H)$ is $G(k)$-conjugate to $\sigma(M)$. Since $\sigma^{-1}(\sigma(H))=H$, we are done. For an easy counterexample to the first part of Proposition~\ref{isogeny} without the centrality assumption (using Frobenius map), see~\cite[Ex.~3.12]{Uchiyama-Nonperfect-pre}. 
\end{rem}

\section{Tits' centre conjecture}
In~\cite{Tits-colloq}, Tits conjectured the following:
\begin{con}\label{centerconjecture}
Let $X$ be a spherical building. Let $Y$ be a convex contractible simplicial subcomplex of $X$. If $H$ is a subgroup of the automorphism group of $X$ stabilizing $Y$, then there exists a simplex of $Y$ fixed by $H$.   
\end{con}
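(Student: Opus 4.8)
The plan is to follow the route along which the centre conjecture was actually proved, by Mühlherr--Tits and then by Ramos-Cuevas and Leeb--Ramos-Cuevas, combining combinatorial reductions with the CAT$(1)$ geometry of the metric realisation of $X$. \textbf{Step 1 (Reductions).} First reduce to the case that $X$ is thick and irreducible: a non-thick spherical building carries a canonical ``thick frame'' on which $\mathrm{Aut}(X)$ acts with $Y$ transported along, and if $X$ is a non-trivial join of buildings one argues one factor at a time (this needs a short argument, since a convex subcomplex of a join need not itself split as a join, but contractibility is inherited suitably). Next invoke Serre's dichotomy for convex subcomplexes: either $Y$ is \emph{completely reducible}, meaning every simplex of $Y$ has an opposite lying in $Y$, in which case $Y$ is the join of a simplex with a thick building; or it is not. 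In the first case contractibility forces the thick-building factor to be empty, so $Y$ is a single simplex and $H$ trivially fixes it. Hence from now on $Y$ is convex, contractible, and not completely reducible.

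\textbf{Step 2 (A canonical centre via CAT$(1)$ geometry).} Pass to the spherical realisation $|X|$, a complete CAT$(1)$ space on which $H$ acts by isometries stabilising the closed convex subset $|Y|$. The crucial input is that $|Y|$ has circumradius strictly less than $\pi/2$; equivalently, one must show that a convex subcomplex of a thick spherical building of circumradius $\ge \pi/2$ is automatically completely reducible (and therefore not contractible). The heuristic is that such a subset must ``wrap around'' far enough to contain an antipodal configuration, and in a thick building such a configuration can only sit inside a genuine spherical subbuilding; making this precise is the content of the Balser--Lytchak analysis of centres of convex subsets of buildings. Granting the bound, the smallest closed ball of $|X|$ containing $|Y|$ is unique, so its centre $c \in |X|$ depends only on $Y$ and is consequently fixed by $H$.

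\textbf{Step 3 (From the circumcentre to a fixed simplex).} The point $c$ lies in the relative interior of a unique simplex $\sigma$ of $X$; since $c \in |Y|$ and the relative interiors of simplices partition $|X|$, the simplex $\sigma$ and all its faces already belong to $Y$. Moreover any automorphism fixing $c$ permutes the open simplices of $X$ and must fix the one containing $c$, so $H$ stabilises $\sigma$. Thus $\sigma$ is a simplex of $Y$ fixed by $H$, as required.

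\textbf{Main obstacle.} The heart of the matter is the circumradius bound in Step 2 for the exceptional types. For buildings all of whose irreducible factors avoid types $F_4$, $E_7$, $E_8$ (and the non-crystallographic $H_3$, $H_4$) the bound --- equivalently, the classification of the ``non-contractible'' convex subcomplexes --- can be extracted combinatorially from the opposition relation on chambers, which is the Mühlherr--Tits case. For the remaining types one is forced into a delicate type-by-type analysis of the possible round convex subsets of the metric realisation, carried out by Ramos-Cuevas and Leeb--Ramos-Cuevas; a uniform, conceptual treatment of this step is, to my knowledge, still unavailable, and it is the step I would expect to absorb essentially all of the real work.
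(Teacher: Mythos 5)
The paper does not prove Conjecture~\ref{centerconjecture}; it records the centre conjecture of Tits as a known, deep theorem and attributes it to the case-by-case work of M\"uhlherr--Tits, Leeb--Ramos-Cuevas, and Ramos-Cuevas, and to the uniform proof of Weiss, without reproducing any argument. Your proposal is therefore a high-level sketch of those published proofs rather than an alternative to anything in the paper, and you rightly acknowledge at the end that the circumradius bound in Step~2 ``would absorb essentially all of the real work''; as written it is an outline, not a proof.

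There is also a concrete error in Step~1. From $Y$ completely reducible you write $Y = s * B$ (join of a simplex with a thick building) and assert that contractibility of $Y$ forces $B$ to be empty. That implication is false: the join of a non-empty simplex with \emph{any} complex is contractible, so when $s \neq \emptyset$ contractibility of $Y$ places no constraint on $B$. The case is still easy to close, but differently: the simplex factor $s$ of the canonical join decomposition is $H$-invariant, so if $s \neq \emptyset$ you already have the desired fixed simplex with no appeal to contractibility at all, and if $s = \emptyset$ then $Y = B$ is itself a building, whose metric realisation has the homotopy type of a non-trivial wedge of spheres and so is not contractible unless degenerate. You should also be careful with the converse you assert in Step~2 (that circumradius $\ge \pi/2$ \emph{forces} complete reducibility of a convex subcomplex of a thick building): the Balser--Lytchak and Leeb--Ramos-Cuevas analyses are considerably more delicate than a single such implication, and packaging all of the type-by-type work as that one clean statement overstates what is in the literature.
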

This so-called centre conjecture of Tits was proved by case-by-case analyses by Tits, M\"{u}hlherr, Leeb, and Ramos-Cuevas~\cite{Leeb-Ramos-TCC-GFA},~\cite{Muhlherr-Tits-TCC-JAlgebra},~\cite{Ramos-centerconj-Geo}. Recently, a uniform proof was given in~\cite{Weiss-center-Fourier}. In relation to the theory of complete reducibility, Serre showed~\cite{Serre-building}:
\begin{prop}\label{SerreContractible}
Let $G$ be a reductive $k$-group. Let $\Delta(G)$ be the building of $G$. If $H$ is not $G$-cr over $k$, then the convex fixed point subcomplex $\Delta(G)^H$ is contractible. 
\end{prop}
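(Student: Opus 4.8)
The plan is to prove the contrapositive: if $Y:=\Delta(G)^{H}$ is nonempty and not contractible, then $H$ is $G$-cr over $k$. When $H$ fails to be $G$-cr over $k$ it lies in a proper $k$-parabolic admitting no $k$-Levi through $H$, so $Y\neq\emptyset$; hence this really is the contrapositive, with no degenerate empty case to worry about. First I would unwind the definitions. Since $k=k_{s}$, the simplices of $\Delta(G)$ are the proper $k$-defined parabolic subgroups of $G$, partially ordered by reverse inclusion, and a simplex $P$ is fixed by $H$ exactly when $H$ normalises $P$, that is (as parabolic subgroups are self-normalising) when $H\leq P$. Thus $Y$ is the subcomplex of proper $k$-parabolics containing $H$; its convexity is part of the statement, and is in any case a standard property of such fixed-point complexes, see~\cite{Serre-building}.

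Next I would reduce the claim to a statement about buildings alone. Recall that two simplices of a spherical building are \emph{opposite} if some apartment contains both and its opposition involution interchanges them; for $\Delta(G)$ this says that the associated $k$-parabolics $P,P'$ are opposite, meaning that $L:=P\cap P'$ is a Levi subgroup of each --- and $L$ is $k$-defined because $P$ and $P'$ are. Consequently, $H$ is $G$-cr over $k$ as soon as every simplex of $Y$ has an opposite lying in $Y$: given a proper $k$-parabolic $P\geq H$, an opposite $P'\in Y$ again contains $H$, whence $H\leq P\cap P'$, a $k$-defined Levi subgroup of $P$ (the case $P=G$ being trivial). So it is enough to prove the following fact about buildings: \emph{if $Y$ is a convex subcomplex of a spherical building and some simplex of $Y$ has no opposite in $Y$, then $Y$ is contractible.} Applied to $Y=\Delta(G)^{H}$, its contrapositive is precisely the implication we want.

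For this fact I would argue metrically, following Serre. Equip $\Delta(G)$ with its spherical $\mathrm{CAT}(1)$ realisation, inside which the realisation $|Y|$ is a convex, hence $\mathrm{CAT}(1)$, subspace. Let $\sigma\in Y$ be a simplex with no opposite in $Y$, and let $b$ be its barycentre. Using that apartments embed isometrically as round spheres, one checks that the points of the realisation at distance exactly $\pi$ from $b$ are precisely the barycentres of the simplices opposite to $\sigma$; since $Y$ contains no such simplex, every point of $|Y|$ lies at distance $<\pi$ from $b$. Therefore $b$ is joined to each point of $|Y|$ by a unique geodesic, which remains inside $|Y|$ by convexity, and these geodesics vary continuously with their endpoints; the resulting geodesic contraction deformation retracts $|Y|$ onto $b$, so $Y$ is contractible. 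I expect the main obstacle to be exactly this geometric input --- the $\mathrm{CAT}(1)$ structure of the building, the identification of the set of points antipodal to a barycentre, and the continuity of geodesic contraction towards a point within distance $<\pi$ of everything (note that the realisation is not locally finite, as $k$ is infinite) --- after which what remains is only the dictionary between opposite simplices, opposite $k$-parabolics and common $k$-defined Levi subgroups.
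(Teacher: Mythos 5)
The paper does not prove this proposition at all: it is attributed to Serre and cited directly to~\cite{Serre-building}, with no argument supplied. Your proposal is a correct reconstruction of Serre's own argument --- reduce $G$-complete reducibility over $k$ to the building-theoretic condition that every simplex of $\Delta(G)^H$ has an opposite inside $\Delta(G)^H$ (via the identification of a pair of opposite $k$-parabolics with a common $k$-defined Levi), and then show that a convex subcomplex of a spherical building in which some simplex has no opposite is contractible by geodesic contraction towards a barycentre in the $\mathrm{CAT}(1)$ realisation. The technical inputs you single out (metric convexity of a combinatorially convex subcomplex, the characterisation of points at distance $\pi$ from a barycentre as barycentres of opposite simplices, continuity of geodesic retraction within the ball of radius $<\pi$) are indeed the content one must import from the theory of spherical buildings, and they are all standard; one small caveat is that for non-$k$-defined $H$ the subcomplex $\Delta(G)^H=\{P : H\le P\}$ is not literally a fixed-point set of an action on $\Delta(G)$, so its convexity requires a separate (Galois-descent or direct combinatorial) argument, which both you and the paper take as given from~\cite{Serre-building}.
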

We identify the set of proper $k$-parabolic subgroups of $G$ with $\Delta(G)$ in the usual sense of Tits~\cite{Tits-book}. Note that for a subgroup $H$ of $G$, $N_{G(k)}(H)$ induces a group of automorphisms of $\Delta(G)$ stabilising $\Delta(G)^H$. Thus, combining the centre conjecture with Proposition~\ref{SerreContractible} we obtain
\begin{prop}\label{normalizer}
If a subgroup $H$ of $G$ is not $G$-cr over $k$, then there exists a proper $k$-parabolic subgroup of $G$ containing $H$ and $N_{G(k)}(H)$. 
\end{prop}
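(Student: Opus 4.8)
The plan is to deduce Proposition~\ref{normalizer} by combining Serre's contractibility result (Proposition~\ref{SerreContractible}) with the centre conjecture (Conjecture~\ref{centerconjecture}, now a theorem), after checking that the hypotheses of the centre conjecture are met in this setting. First I would recall the dictionary between the building $\Delta(G)$ and the poset of proper $k$-parabolic subgroups of $G$: simplices of $\Delta(G)$ correspond to proper $k$-parabolic subgroups, with the face relation corresponding to reverse inclusion, so that a maximal parabolic gives a vertex and a Borel gives a chamber. Under this identification, for a subgroup $H \leq G$ the fixed-point subcomplex $\Delta(G)^H$ consists of those proper $k$-parabolic subgroups $P$ with $H \leq P$; the statement ``$H$ is fixed by a simplex of $Y$'' thus translates to ``there is a proper $k$-parabolic $P$ with $H \leq P$''.

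Next I would set $Y := \Delta(G)^H$. Since $H$ is not $G$-cr over $k$, Proposition~\ref{SerreContractible} tells us $Y$ is convex and contractible; in particular $Y$ is nonempty. Now let $\Gamma := N_G(H)(k)$. For any $g \in \Gamma$ and any $k$-parabolic $P \supseteq H$, we have $gPg^{-1} \supseteq gHg^{-1} = H$, and $gPg^{-1}$ is again a proper $k$-parabolic subgroup because $g$ is a $k$-point; hence $\Gamma$ permutes the simplices of $Y$, i.e.\ $\Gamma$ acts on $X = \Delta(G)$ by building automorphisms stabilizing $Y$. This is exactly the setup of the centre conjecture with $X = \Delta(G)$, the convex contractible subcomplex $Y = \Delta(G)^H$, and the subgroup $\Gamma \leq \operatorname{Aut}(X)$.

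Applying Conjecture~\ref{centerconjecture} (which is a theorem by~\cite{Leeb-Ramos-TCC-GFA},~\cite{Muhlherr-Tits-TCC-JAlgebra},~\cite{Ramos-centerconj-Geo},~\cite{Weiss-center-Fourier}), there is a simplex $\sigma$ of $Y$ fixed by $\Gamma$. Translating back through the dictionary, $\sigma$ corresponds to a proper $k$-parabolic subgroup $P$ of $G$; membership of $\sigma$ in $Y = \Delta(G)^H$ gives $H \leq P$, and $\Gamma$-invariance of $\sigma$ means $gPg^{-1} = P$ for all $g \in N_G(H)(k)$, i.e.\ $N_G(H)(k) \leq P$. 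This yields the conclusion. The only genuinely delicate point is ensuring $\Delta(G)^H$ is literally a simplicial subcomplex to which the combinatorial form of the centre conjecture applies and that Serre's ``convex'' and ``contractible'' are the notions used in Conjecture~\ref{centerconjecture}; but this is precisely the content of Proposition~\ref{SerreContractible} together with the standard identification of~\cite{Tits-book}, so there is no real obstacle — the argument is essentially a translation between the group-theoretic and building-theoretic languages.
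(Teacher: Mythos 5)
Your proposal is correct and takes exactly the route the paper sketches: identify $\Delta(G)$ with the poset of proper $k$-parabolic subgroups, observe via Proposition~\ref{SerreContractible} that $\Delta(G)^H$ is convex and contractible, note that $N_G(H)(k)$ acts on $\Delta(G)$ stabilizing $\Delta(G)^H$, and apply the centre conjecture to obtain a fixed simplex, i.e.\ a proper $k$-parabolic containing both $H$ and $N_G(H)(k)$. You have merely spelled out the translation details that the paper leaves implicit.
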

Proposition~\ref{normalizer} was an essential tool in proving various theoretical results on complete reducibility over nonperfect $k$ in~\cite{Uchiyama-Nonperfect-pre} and~\cite{Uchiyama-Nonperfectopenproblem-pre}. We have asked the following in~\cite[Rem.~6.5]{Uchiyama-Nonperfectopenproblem-pre}:
\begin{question}\label{centralizerQ}
If $H<G$ is not $G$-cr over $k$, then does there exist a proper $k$-parabolic subgroup of $G$ containing $HC_G(H)$?
\end{question}
The answer is affirmative if $C_G(H)$ is $k$-defined (or $k$ is perfect). In this case the set of $k$-points is dense in $C_G(H)$ (since we assume $k=k_s$) and the result follows from Proposition~\ref{normalizer}. The main result in this section is to present the first counterexample to Question~\ref{centralizerQ} when $k$ is nonperfect and $H$ is connected. (A counterexample with discrete $H$ was given in~\cite[Thm.~4.5]{Uchiyama-Nonperfectopenproblem-D4}.)
\begin{prop}\label{centralizerA}
Let $k$ be nonperfect of characteristic $2$. Let $G$ be simple of type $F_4$. Then there exists a nonabelian connected $k$-subgroup $H$ of $G$ such that $H$ is not $G$-cr over $k$ but $HC_G(H)$ is not contained in any proper $k$-parabolic subgroup of $G$. 
\end{prop}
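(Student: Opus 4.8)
The plan is to reuse the subgroup $H$ from Proposition~\ref{SecondMain}: the connected $k$-defined subgroup $H = \langle v(\sqrt a)\cdot M, \epsilon_{18}(x)\mid x\in\bar k\rangle$ with $v(\sqrt a) = \epsilon_{-20}(\sqrt a)\epsilon_{-21}(\sqrt a)$, which we already know is nonabelian, connected, $k$-defined, and not $G$-cr (indeed not $G$-cr over $k$, since not $G$-cr implies not $G$-cr over $k$). So the only thing left to establish is that $H C_G(H)$ is not contained in any proper $k$-parabolic subgroup of $G$. First I would recall from the proof of Proposition~\ref{SecondMain} the explicit description $C_G(H)^{\circ} = v(\sqrt a)\cdot U_{13}$, so that $C_G(H)^{\circ}$ is a one-dimensional unipotent group which is not $k$-defined (its $k$-points are trivial, since conjugating $U_{13}$ by the non-$k$-point $v(\sqrt a)$ moves it off any $k$-subgroup — this is the same mechanism that forced $P_\lambda$ itself to be non-$k$-defined). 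I would also need $C_G(H) = C_G(H)^{\circ}$, or at least that the component group contributes no new $k$-points; this should follow from a direct check that $N_G(H)/C_G(H)$ injects into the outer automorphisms of the $G_2$-part and that the relevant centralizer is already connected.

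The heart of the argument is then a direct analogue of the uniqueness-of-parabolic computation already carried out in the proof of Proposition~\ref{SecondMain}. There we showed that $v(\sqrt a)\cdot P_\lambda$ is the \emph{unique} proper parabolic subgroup of $G$ containing $H$ (not just the unique $k$-parabolic). Now suppose for contradiction that some proper $k$-parabolic $P$ contains $H C_G(H)$. Then in particular $P \supseteq H$, so by that uniqueness $P = v(\sqrt a)\cdot P_\lambda$. But $v(\sqrt a)\cdot P_\lambda$ is not $k$-defined (shown in Proposition~\ref{SecondMain} via the argument of~\cite[Claim~3.6]{Uchiyama-Nonperfectopenproblem-D4}), a contradiction. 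Hence no proper $k$-parabolic contains $H$ at all — which already says $H$ is $G$-ir over $k$ — and a fortiori none contains $HC_G(H)$. This is in fact stronger than needed, but it is the cleanest route; alternatively, one can argue that $C_G(H)^{\circ} = v(\sqrt a)\cdot U_{13}$ together with $H$ generates a subgroup whose only candidate overgroup parabolic is $v(\sqrt a)\cdot P_\lambda$, and then note this is non-$k$-defined. The contrast with Proposition~\ref{normalizer} is exactly the point: $N_G(H)(k)$ \emph{is} contained in a proper $k$-parabolic, but $C_G(H)$ (which here has no nontrivial $k$-points at all) is not.

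I expect the only real obstacle to be verifying carefully that $C_G(H)$ is exactly $v(\sqrt a)\cdot U_{13}$ with no extra $k$-rational structure — i.e.\ pinning down the full centralizer rather than just its identity component, and confirming it has trivial group of $k$-points. This is a finite computation with commutation relations: one takes the description $C_G(M)^{\circ} = G_{13}$ from~\cite[Table 38]{Litterick-Thomas-ex-groups} plus the extra generator $\epsilon_{18}(x)$, checks that $\epsilon_{18}$ does not centralise $U_{-13}$, and concludes $C_G(H)^{\circ} = v(\sqrt a)\cdot U_{13}$ as in Proposition~\ref{SecondMain}; the component-group check uses that $M$ is $G_2$-irreducible so its normalizer modulo centralizer is small. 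Once that is in hand, the rest is immediate from the uniqueness statement in Proposition~\ref{SecondMain} and the non-$k$-definedness of $v(\sqrt a)\cdot P_\lambda$, so no genuinely new idea beyond Section~4 is required — the novelty is purely in the interpretation relative to Question~\ref{centralizerQ}.
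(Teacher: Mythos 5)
There is a fundamental error at the very first step of your plan, and it unravels everything that follows. You propose to reuse the subgroup $H$ from Proposition~\ref{SecondMain} and justify that it is not $G$-cr over $k$ by invoking the implication ``not $G$-cr $\Rightarrow$ not $G$-cr over $k$.'' But that implication is \emph{false} over nonperfect $k$ — indeed, Proposition~\ref{SecondMain} establishes that this very subgroup $H$ is $G$-cr over $k$ while failing to be $G$-cr. So the $H$ you want to use does not satisfy the hypothesis ``not $G$-cr over $k$'' required in Proposition~\ref{centralizerA}; it is a counterexample to the exact implication you are leaning on. Your own later reasoning betrays this: you conclude that no proper $k$-parabolic contains $H$ at all, i.e.\ $H$ is $G$-ir over $k$, which by definition makes $H$ $G$-cr over $k$ — in direct contradiction with what you asserted at the start.

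The paper instead takes $H$ from Proposition~\ref{FirstMain}, namely $H = v(\sqrt a)\cdot M$ with $v(\sqrt a) = \epsilon_{20}(\sqrt a)\epsilon_{21}(\sqrt a)$, which was shown there to be $G$-cr but \emph{not} $G$-cr over $k$ — that is the subgroup with the correct rationality failure. The argument then has a different shape from yours: one does not exploit non-$k$-definedness of a unique parabolic overgroup, but rather shows there is \emph{no} proper parabolic (over $\bar k$, let alone over $k$) containing $HC_G(H)$. Concretely, $C_G(M) \supseteq G_{13}$, and the only proper parabolic of $G$ containing $\langle M, U_{13}\rangle$ is $P_\lambda$; but $U_{-13}\not\leq P_\lambda$ (since $n_{13}\cdot 13 = -13$), so no proper parabolic contains $\langle M, G_{13}\rangle \leq M C_G(M)$, and hence none contains $v(\sqrt a)\cdot (M C_G(M)) = H C_G(H)$. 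Your secondary concern about pinning down the full centraliser $C_G(H)$ versus its identity component is therefore a non-issue for the paper's route: the argument only needs that $C_G(H)$ \emph{contains} $v(\sqrt a)\cdot G_{13}$, not an exact description.
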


\begin{proof}
We use the same $H$, $M$, $v(\sqrt a)$, and $\lambda$ as in the proof of Proposition~\ref{FirstMain}. We have shown that $H$ is not $G$-cr over $k$. We had $G_{13}\leq C_G(M)$, so $\langle M, G_{13}\rangle \leq M C_G(M)$. By the similar argument as in the proof of Proposition~\ref{SecondMain}, we can show that the unique proper parabolic subgroup of $G$ containing $\langle M, U_{13}\rangle $ is $P_\lambda$ (since $n_{13}\cdot 13 = -13$). It is clear that $P_\lambda$ does not contain $U_{-13}$. So there is no proper parabolic subgroup of $G$ containing $M C_G(M)$. Thus
there is no proper parabolic subgroup of $G$ containing $v(\sqrt a)\cdot (M C_G(M))=H C_G(H)$. 
\end{proof} 

\begin{rem}
Proposition~\ref{centralizerA} (and the proof) shows that it is hard to control $C_G(H)$ when $C_G(H)$ is not $k$-defined even if $H$ is connected. This makes Open Problem~\ref{centraliserProblem} difficult even for connected $H$.  
\end{rem}

\section*{Acknowledgements}
While undertaking the work for this article, the second and third authors were supported by Alexander von Humboldt Fellowships. The third author also acknowledges the financial support of JSPS Grant-in-Aid for Early-Career Scientists (19K14516). The authors would like to thank Gerhard R\"{o}hrle and Michael Bate for helpful comments. 
\bibliography{mybib}

\end{document}